\journal{Journal of Computational and Applied Mathematics}
\newcommand{\mathC}{\mathbb{C}}
\newcommand{\diag}{{\rm diag}}
\newcommand{\mat}{\textrm{mat}}
\newcommand{\trace}{\textrm{tr}}
\newcommand{\one}{\mathbf{1}}
\newcommand{\sumv}{\textrm{sum}}
\newcommand{\bicg}{BiCG\xspace}
\newcommand{\glbicg}{Gl-BiCG\xspace}
\newcommand{\eglbicg}{eGl-BiCG\xspace}
\newcommand{\libicg}{Li-BiCG\xspace}
\newcommand{\blbicg}{Bl-BiCG\xspace}
\newcommand{\blbicgrq}{Bl-BiCG-rQ\xspace}
\newcommand{\bicgstab}{BiCGStab\xspace}
\newcommand{\glbicgstab}{Gl-BiCGStab\xspace}
\newcommand{\libicgstab}{Li-BiCGStab\xspace}
\newcommand{\blbicgstab}{Bl-BiCGStab\xspace}
\newcommand{\blbicgstabrq}{Bl-BiCGStab-rQ\xspace}
\newcommand{\qmr}{QMR\xspace}
\newcommand{\glqmr}{Gl-QMR\xspace}
\newcommand{\eglqmr}{eGl-QMR\xspace}
\newcommand{\liqmr}{Li-QMR\xspace}
\newcommand{\blqmr}{Bl-QMR\xspace}
\newcommand{\blcg}{Bl-CG\xspace}
\newcommand{\gmres}{GMRES\xspace}
\newcommand{\blgmres}{Bl-GMRES\xspace}
\newdefinition{rem}{Remark}
\newtheorem{example}{Example}
\newtheorem{proposition}{Proposition}
\newproof{proof}{Proof}
\let\oldexample\example
\renewcommand{\example}{\oldexample\normalfont}
\newcommand{\NoSemicolon}{\DontPrintSemicolon}}
\newcommand{\NoSemicolon}{\dontprintsemicolon}}
\begin{document}
\begin{frontmatter}

\title{On short recurrence Krylov type methods for linear systems with many right-hand sides}

\author[srge]{Somaiyeh Rashedi}
\ead{s\_rashedi\_t@yahoo.com}

\author[sbaf]{Sebastian Birk}
\ead{birk@math.uni-wuppertal.de}

\author[sbaf]{Andreas Frommer}
\ead{frommer@math.uni-wuppertal.de}

\author[srge]{Ghodrat Ebadi}
\ead{ghodrat\_ebadi@yahoo.com}

\address[srge]{Department of Mathematical Sciences, University of Tabriz, 51666-14766 Tabriz, Iran}
\address[sbaf]{Fachbereich Mathematik und Naturwissenschaften, Bergische Universit\"at Wuppertal, 42097 Wuppertal, Germany}

\begin{abstract}
Block and global Krylov subspace methods have been proposed as methods adapted to the situation where one
iteratively solves systems with the same matrix and several right hand sides. These methods are advantageous, since they
allow to cast the major part of the arithmetic in terms of matrix-block vector products, and since, in the block case, they
take their iterates from a potentially richer subspace. In this paper we consider the most established Krylov
subspace methods which rely on short recurrencies, i.e.\ BiCG, QMR and BiCGStab. We propose modifications of
their block variants which increase numerical stability, thus at least partly curing a problem previously observed by several authors.
Moreover, we develop modifications of the ``global'' variants which almost halve the number of
matrix-vector multiplications. We present a discussion as well as numerical evidence which both indicate that the additional
work present in the block methods can be substantial, and that the new ``economic'' versions of the ``global'' BiCG and QMR method can be considered as
good alternatives to the BiCGStab variants.
\end{abstract}

\begin{keyword}
sparse linear systems\sep multiple right-hand sides\sep block methods\sep Krylov subspace\sep non-Hermitian matrices

\MSC[2010] 65F10\sep 65F30\sep 65F50\sep 65H10
\end{keyword}

\end{frontmatter}

\section{Introduction}
We consider simultaneous linear systems with the same matrix $A \in
\mathC^{n \times n}$ and $s$ right-hand sides (r.h.s.) $b_i$,
\begin{equation} \label{eq:simultaneous_systems}
A x_i = b_i, \; i=1,\ldots,s,
\end{equation}
which we also formulate in block form as the matrix equation
\[
AX = B, \; \mbox{ where } X = [x_1|\ldots|x_s], B = [b_1|\ldots|b_s]
\in \mathC^{n \times s}.
\]
As a general rule we will use lower case letters with sub-indices
for the columns of the matrix denoted with the corresponding upper
case letter.

Our overall assumption is that $A$ is large and sparse and that a
(preconditioned) Krylov subspace type iterative method is to be used
to iteratively approximate the solutions for the different r.h.s.
Our interest is in methods which efficiently take advantage of the
fact that we are given the $s$ linear systems with the r.h.s.\
$b_i,i=1,\ldots,s$ simultaneously, and we use the generic name {\em
simultaneous methods} for any such method. Investigations in this
setting are not new, the development and analysis of {\em block
variants} of the standard Krylov subspace methods dating back to at
least the mid 1980s \cite{Oleary80}. With the advent of massively
parallel processor architectures such as GPUs, where memory access
is usually the determining factor for computational speed,
simultaneous linear systems of the form
\eqref{eq:simultaneous_systems} offer the possibility of optimizing
memory access to $A$ by reading and storing $A$ only once while
updating the iterates for all $s$ r.h.s.\ simultaneously. Algorithms
which take this into account can therefore be expected to run faster
because their arithmetic operations perform faster. Moreover,
simultaneous methods also offer the potential to, in addition,
increase the speed of convergence when obtaining their iterates from
a larger, ``block'' Krylov subspace rather than the standard Krylov
subspaces.

As a rule of thumb, working with $s$ several r.h.s.\ simultaneously
instead of just one increases the number of vectors to be stored by
a factor of $s$. This is why simultaneous variants of the \gmres
method may, due to the long recurrences present in \gmres, suffer
from the fact that the method has to be restarted after a relatively
small number of iterations. This typically degrades the speed of
convergence.

In this paper we therefore focus on simultaneous Krylov type methods
relying on short recurrences. We assume that $A$ is a general
non-singular matrix, i.e.\ we do not assume that we can take
advantage of additional properties of $A$ like symmetry, e.g.
Typical and well-established methods for a single right-hand side in this
case are \bicg (bi-conjugate gradients \cite{Fletcher74}), \bicgstab
(stabilized \bicg \cite{vdV92}) and \qmr (quasi-minimal residual
method \cite{FrNa92}), and we mainly develop on three different
simultaneous variants for each of these three methods in this paper.
In section~\ref{sec:types} we introduce the three different
principles termed {\em loop-interchanging}, {\em global} and {\em
block} to generate a simultaneous variant from a given (single
r.h.s.) Krylov subspace method and, as an example, describe the
resulting \bicg variants in some detail. We then devote
section~\ref{sec:literature} to a discussion of the various
simultaneous methods known from the literature in our context
before developing two improvements in
section~\ref{sec:improvements}. One improvement reduces the
arithmetic cost in the {\em global} \bicg and \qmr variants by nearly
a factor of 2, suppressing almost all of the matrix-vector
multiplications with $A^H$. The other improvement enhances numerical
stability in the {\em block} methods through the judicious use of
QR-factorizations of the matrix of simultaneous residuals.
Section~\ref{sec:experiments} contains an extensive numerical study
for the various methods and conclusions on which methods
are to be preferred in which situations.

\section{Loop-interchanging, global and block Krylov methods} \label{sec:types}

In this section we describe three different approaches to obtain a
simultaneous Krylov subspace method for $s$ r.h.s.\ from a given
method for a single r.h.s. All resulting simultaneous methods
reduce to the single r.h.s.\ method for $s=1$.

Solving the systems \eqref{eq:simultaneous_systems} one after the
other with the same Krylov subspace method can be viewed as
performing two nested loops with the outer loop (``\textbf{for}
$i=1,\ldots,s$'') running over the different r.h.s.\ and the inner
loop (``\textbf{for} $k=1,\ldots$ \textbf{until} convergence'')
performing the iteration of the Krylov subspace method. These two
loops can be interchanged, resulting in an algorithm where each
sweep through the inner loop can be organized in a way such that all
occurring matrix-vector multiplications for all $i$ are gathered into
one matrix-block-vector multiplication, the term ``block vector''
denoting a matrix of size $n \times s$. For \bicg as the underlying
single r.h.s.\ Krylov subspace method, Algorithm~\ref{alg:li_BiCG}
states the resulting \libicg method, where Li stands for ``loop
interchanged''. Note that whenever possible we cast inner loop
operations as operations with $n \times s$ matrices with, e.g.,
column $x_i^{(k)}$ of $X^{(k)} \in \mathC^{n \times s}$ being the
\bicg iterate for the r.h.s.\ $b_i$, etc. The inner loop index appears
explicitly only when calculating the $i$-th diagonal entries
$\alpha_i^{(k)}$ and $\beta_i^{(k)}$ of the $s \times s$ diagonal
matrices $\diag(\alpha^{(k)})$ and $\diag(\beta^{(k)})$,
respectively. All multiplications with the matrices $A$ and $A^H$
appear as matrix-block-vector multiplications $AP^{(k)}$ and
$A^H\hat{P}^{(k)}$. If the nominator in the definition of
$\alpha_i^{(k)}$ or $\beta_i^{(k)}$ is zero, the iteration for
r.h.s.\ $b_i$ breaks down.

\begin{algorithm}
\NoSemicolon
choose $X^{(0)}, \hat{R}^{(0)} \in \mathC^{n \times s}$ \\
put $R^{(0)} = B-AX^{(0)}, \, P^{(0)} = R^{(0)}, \hat{P}^{(0)} = \hat{R}^{(0)}$ \\
\For{$k=0,1,2,\dots$ {\rm until convergence}} {
 $Q^{(k)} = A P^{(k)}$ \\
 $\alpha_i^{(k)} = \langle r_i^{(k)}, \hat{r}_i^{(k)} \rangle / \langle q_i^{(k)}, \hat{p}_i^{(k)} \rangle, \; i=1,\ldots,s$ \\
 $X^{(k+1)} = X^{(k)} + P^{(k)}\diag(\alpha^{(k)})$ \\
 $R^{(k+1)} = R^{(k)} - Q^{(k)}\diag(\alpha^{(k)}), \; \hat{R}^{(k+1)} = \hat{R}^{(k)} - (A^H\hat{P}^{(k)})\overline{\diag(\alpha^{(k)})}$ \\
 $\beta_i^{(k)} = \langle r_i^{(k+1)}, \hat{r}_i^{(k+1)} \rangle / \langle r_i^{(k)}, \hat{r}_i^{(k)} \rangle , \; i=1,\ldots,s$ \\
 $P^{(k+1)} = R^{(k+1)} + P^{(k)}\diag(\beta^{(k)}), \; \hat{P}^{(k+1)} = \hat{R}^{(k+1)} + \hat{P}^{(k)}\overline{\diag(\beta^{(k)})}$
 }
\caption{Loop-interchanged \bicg (\libicg) \label{alg:li_BiCG}}
\end{algorithm}

{\em Global methods} take a different approach. They cast the $s$
systems \eqref{eq:simultaneous_systems} into one big, tensorized $sn
\times sn$ system
\begin{equation} \label{eq:global}
(I \otimes A) x = b, \mbox{ where } b = \left[ \begin{array}{c} b_1
\\ \vdots \\ b_{s} \end{array} \right], \, x = \left[ \begin{array}{c}
x_1 \\ \vdots \\ x_{s} \end{array} \right],
\end{equation}
and then apply a standard Krylov subspace method to the system
\eqref{eq:global}. Using the $\mat_{m\times s}$ operator to cast
vectors $x$ of length $ns$ into matrices $X$ of size $n \times s$
``by columns'', i.e.\ $(X)_{ij} = x_{(j-1)n+i}$, and the identities
\begin{align*}
  &\mat_{n \times s}((I \otimes A)x) = AX,  \enspace \langle x, y \rangle = \trace(Y^HX)\enspace \mbox{ where}\\
  &X = \mat_{n \times s}(x), Y = \mat_{n \times s}(y),
\end{align*}
we end up with the formulation of \bicg for \eqref{eq:global} given
in Algorithm~\ref{alg:gl_BiCG} known as global \bicg (\glbicg), see
\cite{Jbilou05}. The algorithm breaks down if the nominator in
$\alpha^{(k)}$ or $\beta^{(k)}$ is zero.

\begin{algorithm}
\NoSemicolon %\SetKwInOut{Given}{Given}
choose $X^{(0)}, \hat{R}^{(0)} \in \mathC^{n \times s}$ \\
put $R^{(0)} = B-AX^{(0)}, \, P^{(0)} = R^{(0)}, \hat{P}^{(0)} = \hat{R}^{(0)}$ \\
\For{$k=0,1,2,\dots$ {\rm until convergence}} {
 $Q^{(k)} = A P^{(k)}$ \\
 $\alpha^{(k)} = \trace((\hat{R}^{(k)})^HR^{(k)}) / \trace((\hat{P}^{(k)})^HQ^{(k)})$ \\
 $X^{(k+1)} = X^{(k)} + \alpha^{(k)} P^{(k)}$ \\
 $R^{(k+1)} = R^{(k)} - \alpha^{(k)} Q^{(k)}, \; \hat{R}^{(k+1)} = \hat{R}^{(k)} - \overline{\alpha^{(k)}}(A^H\hat{P}^{(k)})$ \\
 $\beta^{(k)} = \trace((\hat{R}^{(k+1)})^HR^{(k+1)})/ \trace((\hat{R}^{(k)})^HR^{(k)})$ \\
 $P^{(k+1)} = R^{(k+1)} + \beta^{(k)}P^{(k)}, \; \hat{P}^{(k+1)} = \hat{R}^{(k+1)} + \overline{\beta^{(k)}}\hat{P}^{(k)}$
 }
\caption{Global \bicg (\glbicg) \label{alg:gl_BiCG}}
\end{algorithm}

In the loop interchange and the global variant of \bicg the $i$th
columns $r^{(\ell)}_i$ for $\ell = 0,\ldots,k-1$ represent a basis
for the Krylov subspace
\[
\mathcal{K}_k(A,r_i^{(0)}) = \mbox{span} \{r_i^{(0)},
Ar_i^{(0)},\ldots A^{(k-1)}r_i^{(0)}\},
\]
and in both methods the iterate $x_i^{(k)}$, the $i$th column of
$X^{(k)}$, is taken from $x^{(0)}_i + \mathcal{K}_k(A,r_i^{(0)})$.
{\em Block} Krylov subspace methods take advantage of the fact that
the Krylov subspaces $\mathcal{K}_k(A,r_i^{(0)}), i=1,\ldots,s$ are
available simultaneously and take each iterate $x_i^{(k)}$ from the
larger ``block'' Krylov subspace
\[
\mathcal{K}_k(A,R^{(0)}) := \sum_{i=1}^s \mathcal{K}_k(A,r_i^{(0)}),
\]
which has dimension $sk$ (or less if linear dependence occurs ``across''
the single r.h.s.\
Krylov subspaces $\mathcal{K}_k(A,r_i^{(0)})$).
The usual approach is to transfer the variational
characterization for the iterates of the single r.h.s.\ method to the
block Krylov subspace. In this manner, the block \bicg method from
\cite{Oleary80} defines its iterates $X^{(k)}$ (they appear again as
the columns of an $n \times s$ block vector) by requiring that the
iterates $x_i^{(k)}$ are from $x_i^{(0)}+ \mathcal{K}_k(A,R^{(0)})$
and their residuals are orthogonal to
$\mathcal{K}_k(A^H,\hat{R}^{(0)})$ for some ``shadow residual''
block vector $\hat{R}^{(0)} \in \mathC^{n \times s}$. The resulting
method \blbicg (block \bicg), is given as
Algorithm~\ref{alg:bl_BiCG}. Note that now the quantities
$\alpha^{(k)}, \hat{\alpha}^{(k)}, \beta^{(k)}$ and
$\hat{\beta}^{(k)}$ are $s \times s$-matrices, and the method breaks
down prematurely if one of the matrices $(P^{(k)})^HA^H
\hat{P}^{(k)}, (\hat{R}^{(k)})^HR^{(k)}$ is singular.

\begin{algorithm}
\NoSemicolon
choose $X^{(0)}, \hat{R}^{(0)} \in \mathC^{n \times s}$ \\
put $R^{(0)} = B-AX^{(0)}, \, P^{(0)} = R^{(0)}, \hat{P}^{(0)} = \hat{R}^{(0)}$ \\
\For{$k=0,1,2,\dots$ {\rm until convergence}} {
 $Q^{(k)} = A P^{(k)}, \; \hat{Q}^{(k)} = A^H\hat{P}^{(k)}$ \\
 $\alpha^{(k)} = \big((\hat{P}^{(k)})^HQ^{(k)}\big)^{-1}\big((\hat{R}^{(k)})^HR^{(k)}\big)$ \\
 $\hat{\alpha}^{(k)} =
 \big((P^{(k)})^H \hat{Q}^{(k)}\big)^{-1}\big(({R}^{(k)})^H\hat{R}^{(k)}\big)$ \\
 $X^{(k+1)} = X^{(k)} +  P^{(k)}\alpha^{(k)}$ \\
 $R^{(k+1)} = R^{(k)} -  Q^{(k)}\alpha^{(k)}, \; \hat{R}^{(k+1)} = \hat{R}^{(k)} - \hat{Q}^{(k)}\hat{\alpha}^{(k)}$ \\
 $\beta^{(k)} = \big((\hat{R}^{(k)})^HR^{(k)}\big)^{-1}\big((\hat{R}^{(k+1)})^HR^{(k+1)}\big)$ \\
 $\hat{\beta}^{(k)} = \big(({R}^{(k)})^H\hat{R}^{(k)}\big)^{-1}\big(({R}^{(k+1)})^H\hat{R}^{(k+1)}\big) $\\
 $P^{(k+1)} = R^{(k+1)} + P^{(k)}\beta^{(k)}, \; \hat{P}^{(k+1)} = \hat{R}^{(k+1)} + \hat{P}^{(k)}\hat{\beta}^{(k)}$
 }
\caption{Block \bicg (\blbicg) \label{alg:bl_BiCG}}
\end{algorithm}

Apart from the matrix-vector multiplications, the work to update the
various quantities is substantially higher in block \bicg as compared
to loop interchanged \bicg and global \bicg. Counting an inner product
or a SAXPY operation with vectors of length $n$ as one {\em vector
operation} (representing $n$ additions and $n$ multiplications), we
have the following proposition, the proof of which also gives an
indication on how to save arithmetic work by re-using information
from the previous iteration.

\begin{proposition} \label{prop:cost}
One iteration of \libicg, \glbicg or \blbicg requires two
matrix-block-vector multiplications (one with $A$ and one with
$A^H$) with block-vectors of size $n \times s$ plus $7s$ additional
vector operations for \libicg as well as \glbicg, and $7s^2$
additional vector operations for \blbicg.
\end{proposition}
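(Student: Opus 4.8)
The plan is to verify the operation count by going through each algorithm line by line, classifying every update as matrix-block-vector multiplications (counted separately) or as vector operations, where one inner product or one SAXPY of length-$n$ vectors counts as one vector operation. I would treat the three algorithms in parallel, since their structure is nearly identical.

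First I would handle the two matrix-block-vector multiplications. In all three algorithms the step $Q^{(k)} = AP^{(k)}$ supplies the multiplication with $A$. For the multiplication with $A^H$, I would note that \libicg and \glbicg apply $A^H$ to $\hat P^{(k)}$ inside the $\hat R^{(k+1)}$ update, whereas \blbicg computes $\hat Q^{(k)} = A^H \hat P^{(k)}$ explicitly; in every case this is exactly one multiplication with $A^H$. These multiplications are excluded from the vector-operation tally, so what remains is to count the length-$n$ inner products and SAXPYs in the scalar/matrix updates.

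Next I would tally the vector operations for \libicg and \glbicg, which scale linearly in $s$. The idea is that every operation acts columnwise (or reduces to a trace, i.e.\ a sum of $s$ inner products), so each line contributes a multiple of $s$. I expect the coefficients to come out as: the two numerators $\langle r_i^{(k)},\hat r_i^{(k)}\rangle$ and $\langle q_i^{(k)},\hat p_i^{(k)}\rangle$ in $\alpha_i^{(k)}$ give $2s$ inner products (for \glbicg these are the two traces, again $2s$); the updates of $X^{(k+1)}$, $R^{(k+1)}$, and $\hat R^{(k+1)}$ give $3s$ SAXPYs; the numerator of $\beta_i^{(k)}$ reuses $\langle r_i^{(k+1)},\hat r_i^{(k+1)}\rangle$ from the next iteration's $\alpha$ computation and its denominator reuses $\langle r_i^{(k)},\hat r_i^{(k)}\rangle$, so $\beta$ is free if one reorders the work; and the two updates of $P^{(k+1)}$ and $\hat P^{(k+1)}$ give $2s$ SAXPYs. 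This sums to $7s$, and the reuse of inner products across consecutive iterations is precisely the ``re-using information from the previous iteration'' hinted at in the statement.

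Finally I would do the same for \blbicg, where every scalar is replaced by an $s \times s$ matrix. The key observation is that a product of the form $(\hat P^{(k)})^H Q^{(k)}$ is an $s \times s$ matrix whose entries are $s^2$ inner products of length-$n$ vectors, hence $s^2$ vector operations, and a block SAXPY $P^{(k)}\beta^{(k)}$ with $P^{(k)} \in \mathC^{n\times s}$ and $\beta^{(k)} \in \mathC^{s \times s}$ likewise costs $s^2$ vector operations. I would then count: the four Gram-type matrices $(\hat P^{(k)})^H Q^{(k)}$, $(\hat R^{(k)})^H R^{(k)}$, $(P^{(k)})^H \hat Q^{(k)}$, $({R}^{(k)})^H \hat R^{(k)}$ cost $4s^2$, but noting that $(\hat R^{(k+1)})^H R^{(k+1)}$ and $({R}^{(k+1)})^H \hat R^{(k+1)}$ needed for $\beta^{(k)},\hat\beta^{(k)}$ coincide with the $(\hat R^{(k)})^H R^{(k)}$ and $({R}^{(k)})^H \hat R^{(k)}$ of the next iteration, these are again obtained for free by reuse; the five block SAXPYs updating $X^{(k+1)}$, $R^{(k+1)}$, $\hat R^{(k+1)}$, $P^{(k+1)}$, $\hat P^{(k+1)}$ cost $5s^2$. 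The inversions and $s \times s$-by-$s \times s$ products are $O(s^3)$ arithmetic but involve no length-$n$ vectors, so they do not contribute to the vector-operation count. Adding $4s^2$ (with the $\beta$-numerators free) to a careful accounting of the SAXPYs should yield $7s^2$.

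The main obstacle I anticipate is the bookkeeping of the reuse across iterations: the claimed counts are tight only if one credits the $\beta$-numerators to the following iteration's $\alpha$-computation rather than counting them twice, and one must check that no inner product or SAXPY is silently double-counted or omitted. I would therefore be careful to fix a convention—counting each distinct length-$n$ inner product and each distinct SAXPY exactly once per iteration—and to verify that the quantities reused are genuinely identical (same block vectors, same iteration index) so that the reduction from a naive count to $7s$ and $7s^2$, respectively, is legitimate.
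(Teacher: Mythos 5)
Your overall strategy is the same as the paper's: a direct line-by-line count, with inner products reused across consecutive iterations. Your tally for \libicg and \glbicg is correct and matches the paper exactly ($2s$ inner products for the two factors of $\alpha$, the $\beta$ quotient obtained for free by saving $\langle r_i^{(k+1)},\hat r_i^{(k+1)}\rangle$, and $5s$ SAXPYs for the five block updates, totalling $7s$).

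For \blbicg, however, your count does not close. You arrive at $4s^2$ vector operations for the Gram-type matrices plus $5s^2$ for the five block SAXPYs, which is $9s^2$, and the final sentence merely asserts that a ``careful accounting'' should yield $7s^2$; the reuse of the $\beta$-numerators across iterations that you invoke does not remove the extra $2s^2$, because $(\hat R^{(k+1)})^HR^{(k+1)}$ and $(R^{(k+1)})^H\hat R^{(k+1)}$ still have to be formed once somewhere. The missing idea, which the paper states explicitly, is the adjoint symmetry between the factors defining $\alpha^{(k)}$ and those defining $\hat\alpha^{(k)}$ (and likewise for $\beta^{(k)}$ and $\hat\beta^{(k)}$): since $Q^{(k)}=AP^{(k)}$ and $\hat Q^{(k)}=A^H\hat P^{(k)}$, one has
\[
(P^{(k)})^H\hat Q^{(k)}=\bigl((\hat P^{(k)})^HQ^{(k)}\bigr)^H,
\qquad
(R^{(k)})^H\hat R^{(k)}=\bigl((\hat R^{(k)})^HR^{(k)}\bigr)^H,
\]
so only two of your four Gram matrices require fresh inner products. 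That reduces the inner-product count to $2s^2$ per iteration (one matrix of the form $(\hat P^{(k)})^HQ^{(k)}$ and one of the form $(\hat R^{(k+1)})^HR^{(k+1)}$, the latter saved for the next iteration), which together with the $5s^2$ SAXPYs gives the claimed $7s^2$. You should add this observation; everything else in your argument, including the dismissal of the $\mathcal{O}(s^3)$ cost for $s\times s$ matrix products and inversions, is consistent with the paper.
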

\begin{proof}
We only have to care about operations other than the multiplications
with $A$ and $A^H$. In \libicg, assuming that we save $\langle
\hat{r}_i^{(k+1)}, r_i^{(k+1)}\rangle$ for re-use in the next
iteration, we need a total of $2s$ inner products to compute all
$\alpha_i^{(k)}, \beta_i^{(k)}$ and $s$ SAXPYs for each of
$X^{(k+1)}$, $R^{(k+1)}$, $\hat{R}^{(k+1)}$, $P^{(k+1)}$,
$\hat{P}^{(k+1)}$. Exactly the same count holds for \glbicg. In
\blbicg we can save $\big(\hat{R}^{(k+1)}\big)^HR^{(k+1)}$ for use
in the next iteration, and we can exploit the fact that the two
factors defining $\alpha^{(k)}$ are just the adjoints of those
defining $\hat{\alpha}^{(k)}$, and similarly for $\beta^{(k)}$ and
$\hat{\beta}^{(k)}$.  So we need just $2s^2$ inner products to build
these factors, and we neglect the cost $\mathcal{O}(s^3)$ for
multiplying $s\times s$ matrices. The computation of each of
$X^{(k+1)}$, $R^{(k+1)}$, $\hat{R}^{(k+1)}$, $P^{(k+1)}$,
$\hat{P}^{(k+1)}$ requires $s^2$ SAXPYs.
\end{proof}

We note that the updates of $X^{(k+1)}$, $R^{(k+1)}$,
$\hat{R}^{(k+1)}$, $P^{(k+1)}$, $\hat{P}^{(k+1)}$ in the block
method actually represent BLAS3 GEMM operations, see \cite{Blas3}
which have a more favorable ratio of memory access to arithmetic
work than SAXPY operations, so the overhead of a factor of $s$ of
the block method vs.\ the loop interchange and the global method
suggested by Proposition~\ref{prop:cost} may show less in actual
timings.

In a similar way one obtains the three simultaneous variants \liqmr,
\glqmr and \blqmr of the \qmr method. The variational
characterization in \blqmr is that the 2-norm of the coefficients
which represent the residual from $K_k(A,R^{(0)})$ in the nested
bi-orthogonal basis of $K_k(A,R^{(0)})$ with respect to
$K_k(A^H,\hat{R}^{(0)})$, be minimal; see \cite{Freund97}. We do not write down
the resulting algorithms explicitly nor do we so for the \bicgstab
variants \libicgstab, \glbicgstab and \blbicgstab. It is worth
mentioning, though, that \bicgstab does not have a proper variational
characterization, its main property being that multiplications with
$A^H$ present in \bicg are replaced by those with $A$, thus obtaining
iterate $x^{(k)}$ from $x^{(0)} + \mathcal{K}_{2k}(A,r^{(0)})$ while
imposing a steepest descent property on the residual as an
intermediate step. The block \bicgstab method from \cite{ELg03}
transfers this approach to the multiple r.h.s.\ case such that the
$k$-th iterate $x_i^{(k)}$ is from $x_i^{(0)} +
\mathcal{K}_{2k}(A,R^{(0)})$, imposing a condition on the residuals
in the intermediate steps which, interestingly, is quite in the
spirit of a ``global'' steepest descent method. We refer to
\cite{ELg03}
for details.

\section{Discussion of the literature} \label{sec:literature}
The loop interchange approach is most probably not new, although we are
not aware of a systematic discussion as a construction principle for
simultaneous methods.

Global methods were considered in a variety of papers. Surprisingly,
although they all just realize the approach to perform the
respective single r.h.s.\ method for \eqref{eq:global} and then
``matricize'' all operations with vectors of length $ns$, we could
not find this fact mentioned explicitly in the literature.
The first global methods are global full orthogonalization
(Gl-FOM)  and global generalized minimal residual (Gl-GMRES),
introduced in \cite{Jbilou99}. \glbicg and \glbicgstab were
suggested in \cite{Jbilou05}, and global variants of less well-known
Krylov subspace methods were subsequently proposed in \cite{Hey01}
(Gl-CMRH), \cite{Zhang10} (Gl-CGS), \cite{Yang07} (Gl-SCD) and
\cite{Zhang11} (Gl-BiCR and its variants).

Block Krylov subspace methods were introduced in \cite{Oleary80},
where \blbicg was considered together with block conjugate gradients
(\blcg). The (long recurrence) block generalized minimal residual
(\blgmres) algorithm goes back to \cite{SimonciniGallopoulos1995} and
\cite{Vital1990}, and a convergence analysis can be
found in \cite{SimonciniGallopoulos1996}. Block Krylov subspace
methods require additional work for factorizations and multiplications of $n\times s$ or
$s \times s$ matrices in each iteration
(cf.~Proposition~\ref{prop:cost}).
Very often, this substantially slows down the overall process.
The hybrid method from \cite{Simo96} describes an approach,
where this additional work is reduced by adding cycles in which a (matrix
valued)
polynomial obtained from the block Arnoldi process of a previous
cycle is used. The additional cycles save arithmetic cost since they
do not perform the block Arnoldi process.

The \blqmr method was suggested in \cite{Freund97}, \blbicgstab goes
back to \cite{ELg03}, and Bl-LSQR, a block least squares QR-algorithm to solve the normal
equations for simultaneous r.h.s.\ was given in \cite{Karimi06}. The
block methods relying on the non-symmetric Lanczos process can
suffer from the fact that this process can break down prematurely if
the bi-orthogonality condition imposed on the to be computed block
basis vectors can not be fulfilled. Numerically, this will typically
result in very ill-conditioned $s \times s$ matrices in \blbicg and
\blbicgstab for which linear systems have to be solved in the
algorithm. The non-symmetric Lanczos process at the basis of \blqmr
from \cite{deflation1} does, in principle, not suffer from this phenomenon since it
incorporates a look-ahead strategy which cures almost all possible
unwanted breakdowns.\footnote{This cure is somewhat cumbersome to implement, though.}
Both, the block Arnoldi process at the basis of
\blgmres as well as the symmetric or non-symmetric Lanczos process
at the basis of \blcg or \blbicg, \blqmr and \blbicgstab,
respectively, should account for an additional danger of producing
ill-conditioned $s \times s$ systems arising because of deflation.
Occurrence of deflation within a block Krylov subspace
method means that while the block Krylov subspaces
$\mathcal{K}_k(A,R^{(0)})$ or $\mathcal{K}_k(A^H,\hat{R}^{(0)})$ has dimension
$ks$, the next,
$\mathcal{K}_{k+1}(A,R^{(0)})$ or $\mathcal{K}_{k+1}(A^H,\hat{R}^{(0)})$,
has dimension less than $(k+1)s$. Similar reductions in
dimension might occur again in later iterations. In principle,
deflation is advantageous, since it allows to reduce the number of
matrix-vector operations per iteration. However, algorithms have to
be adjusted, and additional book-keeping is necessary. Deflation in
the unsymmetric block Lanczos process and the block Arnoldi process was
considered in \cite{deflation2}, the consequences for
\blgmres are discussed in detail in \cite{Gut07}. \blqmr from
\cite{Freund97} and the \blcg variant considered in \cite{Birk14}
address deflation by indeed explicitly reducing the dimension of the
block Krylov subspaces. Interestingly, the latter two methods now work by
applying the matrix-vector multiplications one at a time instead
of simultaneously to a block vector, checking for deflation after
each such operation. These methods can thus no longer take advantage
of a possible performance gain due to multiplications of matrices
with block-vectors. This is in contrast to the modifications of \blbicg and
\blcg from \cite{Oleary80} and \cite{Dub01} which ``hide'' the possible
singularity of
the $s \times s$ matrices by using QR-decompositions and modified
update formulae. This approach to treat deflation implicitly does
not allow to save matrix-vector multiplications when deflation
occurs, but keeps the advantage of relying on matrix-block-vector
multiplications. It does not require any additional book-keeping.
Recently, in \cite{Birk2015} a variant of \blcg was developed which
treats deflation explicitly, but still uses matrix-block-vector
multiplications.

A round-off error analysis for \blbicgstab in \cite{Tad09} lead the
authors to suggest a numerically more stable modification which
basically interchanges the minimization step and the \bicg step present in each
iteration of \glbicgstab.
A numerically more stable variant of \blbicg which enforces
$A$-orthogonality between the individual vectors and not just between block
vectors, and which contains an additional \qmr-type stabilization step, was
considered in \cite{Simoncini97}.

\section{Improvements: Cost and stability} \label{sec:improvements}
In any of the three simultaneous \bicg methods we are free to choose
$\hat{R}^{(0)}$, the initial block-vector of shadow residuals,
popular choices being $\hat{R}^{(0)} = R^{(0)}$ or $\hat{R}^{(0)} =
\overline{R^{(0)}}$. If we take $\hat{R}^{(0)}$ to have identical
columns, i.e.\
\[
\hat{R}^{(0)} = \hat{r}^{(0)} \one^T, \mbox{ where } \hat{r}^{(0)}
\in \mathC^n, \; \one = (1,\ldots,1)^T \in \mathC^s,
\]
we see that in \glbicg (Algorithm~\ref{alg:gl_BiCG}) all ``shadow''
block vectors $\hat{P}^{(k)}$ and $\hat{R}^{(k)}$ conserve this
structure for all $k$, i.e.\
\[
\hat{P}^{(k)} = \hat{p}^{(k)} \one^T, \; \hat{R}^{(k)} =
\hat{r}^{(k)}\one^T, \enspace \hat{p}^{(k)}, \hat{r}^{(k)} \in \mathC^n.
\]
A comparable situation occurs neither in \libicg nor in
\blbicg. Only in \glbicg can we therefore take advantage of an
initial shadow residual with identical columns and just compute the
vectors $\hat{p}^{(k)}$ and $\hat{r}^{(k)}$ rather than the
respective block vectors in iteration $k-1$. In particular, we then
need to multiply $A^H$ only with a single vector instead of a whole
block vector with $s$ columns. The resulting ``economic'' version
\eglbicg of \glbicg is given as Algorithm~\ref{alg:egl_BiCG}, where
we used the $\sumv$-operator to denote the sum of all components of
a (row) vector and the relation
  $\trace(\one \hat{r}^H R) = \sumv(\hat{r}^HR)$
for
  $\one \in \mathC^s, \hat{r} \in \mathC^n, R \in \mathC^{n \times s}$.

\begin{algorithm}
\NoSemicolon
choose $X^{(0)} \in \mathC^{n \times s}, \;  \hat{r}^{(0)} \in \mathC^n$ \\
put $R^{(0)} = B-AX^{(0)}, \, P^{(0)} = R^{(0)}, \hat{p}^{(0)} = \hat{r}^{(0)}$ \\
\For{$k=0,1,2,\dots$ {\rm until convergence}} {
 $Q^{(k)} = A P^{(k)}$ \\
 $\alpha^{(k)} = \sumv((\hat{r}^{(k)})^HR^{(k)}) / \sumv((\hat{p}^{(k)})^HQ^{(k)})$ \\
 $X^{(k+1)} = X^{(k)} + \alpha^{(k)} P^{(k)}$ \\
 $R^{(k+1)} = R^{(k)} - \alpha^{(k)} Q^{(k)}, \; \hat{r}^{(k+1)} = \hat{r}^{(k)} - \overline{\alpha^{(k)}}(A^H\hat{p}^{(k)})$ \\
 $\beta^{(k)} = \sumv((\hat{r}^{(k+1)})^HR^{(k+1)})/ \sumv((\hat{r}^{(k)})^HR^{(k)})$ \\
 $P^{(k+1)} = R^{(k+1)} + \beta^{(k)}P^{(k)}, \; \hat{p}^{(k+1)} = \hat{r}^{(k+1)} + \overline{\beta^{(k)}}\hat{p}^{(k)}$
 }
\caption{Economic global \bicg (\eglbicg) \label{alg:egl_BiCG}}
\end{algorithm}

In a similar way, we obtain an economic version, \eglqmr, of the
global \qmr method. There is no such opportunity for \glbicgstab,
where multiplications with $A^H$ are not present anyway. In the
economic global methods the work to be performed for the shadow
residuals and search directions is identical to that for just one
single r.h.s., so that it becomes increasingly negligible for
larger values of $s$, $s \gtrapprox 10$ say. In this manner, the
economic global methods eliminate one of the important disadvantages
of the short recurrence Krylov subspace methods based on the
non-symmetric Lanczos process, and we will see in the numerical
experiments that \eglbicg and \eglqmr perform favorably when
compared with simultaneous \bicgstab variants, for example.

Deflation in \blbicg occurs if at some iteration $k$
one of the block vectors $P^{(k)}, \hat{P}^{(k)}, R^{(k)},$ and $\hat{R}^{(k)}$
becomes rank deficient, even though one might have
taken care for this not to happen in iteration 0.
In practice, exact deflation happens rarely, but if one of the block vectors
is almost rank deficient (inexact deflation),
some of the matrices $\alpha^{(k)}, \hat\alpha^{(k)}, \beta^{(k)},$ and $\hat\beta^{(k)}$
will be computed inaccurately,
resulting in numerical instabilities of the algorithm. Already in
\cite{Oleary80} it was therefore proposed to use a QR factorization of the
search direction block vectors $P^{(k)}$ and $\hat{P}^{(k)}$ to avoid such
instabilities. A systematic study of different other remedies was
presented in \cite{Dub01} for the Hermitian positive definite case, i.e., for
the block CG method, and
it turned out that the variant which uses a QR factorization of the residual
block vectors is most convenient. This approach can be transported to block
\bicg, and we performed a series of numerical comparisons which shows that
also in the non-Hermitian case the variant which relies on a QR
factorization of the block residuals is to be preferred over the one with
QR-factorization of the block search vectors.

To precisely describe the resulting variant of block \bicg, consider
a (thin) QR-factorization of the block residuals in \blbicg
(Algorithm~\ref{alg:bl_BiCG}),
\begin{equation} \label{eq:QR}
  R^{(k)} = {Q}^{(k)} {C}^{(k)},\quad \hat R^{(k)} =  {\hat Q}^{(k)}{\hat C}^{(k)},
\end{equation}
where
\[
  Q^{(k)}, \hat{Q}^{(k)} \in \mathC^{n \times s}, \enspace
  (Q^{(k)})^HQ^{(k)} = {(\hat{Q}^{(k)})}^H \hat{Q}^{(k)} = I, \enspace
  C^{(k)},  \hat{C}^{(k)} \in \mathC^{s \times s}.
\]
A possible ill-conditioning of $R^{(k)}$ or $\hat{R}^{(k)}$ translates into
an ill-conditioned matrix $C^{(k)}$ or $\hat{C}^{(k)}$, respectively, and
\blbicg can now be stabilized by using the QR-factorizations \eqref{eq:QR} while at the
same time avoiding the occurrence of $(C^{(k)})^{-1}$ and
$(\hat{C}^{(k)})^{-1}$. To do so, we represent the search direction block vectors
$P^{(k)}$ and $\hat{P}^{(k)}$ as
\[
P^{(k)}=V^{(k)}C^{(k)} ,\quad \hat P^{(k)}=\hat V^{(k)}\hat C^{(k)}.
\]
For the update of the block vectors $V^{(k)}$ and
$\hat{V}^{(k)}$ from Algorithm~\ref{alg:bl_BiCG} we then get
\begin{eqnarray*}
V^{(k+1)} &=& Q^{(k+1)}+V^{(k)} \big( C^{(k)} \beta^{(k)} (C^{(k+1)})^{-1}
\big),\\
\hat V^{(k+1)} &=& \hat Q^{(k+1)}+\hat V^{(k)} \big( \hat{C}^{(k)}
\hat{\beta}^{(k)} (\hat{C}^{(k+1)})^{-1} \big),
\end{eqnarray*}
and using \eqref{eq:QR} in the update for the block residuals we obtain
\begin{eqnarray*}
Q^{(k+1)}C^{(k+1)}{(C^{(k)})}^{-1} &=&
Q^{(k)}-AV^{(k)}\big(C^{(k)}\alpha^{(k)}(C^{(k)})^{-1}\big), \\
\hat{Q}^{(k+1)}\hat{C}^{(k+1)}{(\hat{C}^{(k)})}^{-1} &=&
\hat{Q}^{(k)}-A^H\hat{V}^{(k)}\big(\hat{C}^{(k)}\hat{\alpha}^{(k)}
(\hat{C}^{(k)})^{-1}\big).
\end{eqnarray*}
This shows that computationally we can obtain
$S^{(k+1)}=C^{(k+1)}{(C^{(k)})}^{-1}$ together with $Q^{(k+1)}$ from a
QR factorization
of $Q^{(k)}-AV^{(k)}\big(C^{(k)}\alpha^{(k)}(C^{(k)})^{-1}\big)$, and
similarly
for the ``shadow'' block vectors. Moreover, we have
\begin{eqnarray}
C^{(k)}\alpha^{(k)}(C^{(k)})^{-1} &=& C^{(k)}
\big((\hat{P}^{(k)})^HAP^{(k)}\big)^{-1}\big((\hat{R}^{(k)})^HR^{(k}\big)
{(C^{(k)})}^{-1} \nonumber \\
&=& \big((\hat{V}^{(k)})^HAV^{(k)}\big)^{-1}\big((\hat{Q}^{(k)}
)^HQ^{(k}\big), \label{eq:newalpha}
\end{eqnarray}
and, analogously,
\begin{eqnarray}
\hat{C}^{(k)}\hat\alpha^{(k)}(\hat{C}^{(k)})^{-1}
& = & \big(({V}^{(k)})^HA^H\hat
V^{(k)}\big)^{-1}\big(({Q}^{(k)})^H\hat Q^{(k)}\big), \label{eq:newalphahat}\\
C^{(k)}\beta^{(k)}(C^{(k+1)})^{-1}
& = & \big((\hat{Q}^{(k)})^HQ^{(k)}\big)^{-1} (\hat
S^{(k+1)})^H\big((\hat{Q}^{(k+1)})^HQ^{(k+1)}\big), \label{eq:newbeta}\\
\hat C^{(k)}\hat \beta^{(k)}(\hat C^{(k+1)})^{-1} & = &
\big(({Q}^{(k)})^H\hat Q^{(k)}\big)^{-1}
(S^{(k+1)})^H\big(({Q}^{(k+1)})^H\hat Q^{(k+1)}\big)
\label{eq:newbetahat}.
\end{eqnarray}
Putting things together, we arrive at the block \bicg algorithm using
QR-factorization of the block residual, termed \blbicgrq, given as
Algorithm~\ref{alg:bl_BiCG_rQ}.

\begin{algorithm}
\NoSemicolon
choose $X^{(0)}, \hat{R}^{(0)} \in \mathC^{n \times s}$ \\
put $R^{(0)} = B-AX^{(0)}, \, Q^{(0)}C^{(0)} = R^{(0)}, \, \hat Q^{(0)}\hat C^{(0)} = \hat R^{(0)},\,V^{(0)} = Q^{(0)}, \hat{V}^{(0)} = \hat{Q}^{(0)}$ \\
\For{$k=1,2,\dots$ {\rm until convergence}} {
 $W^{(k)} = A V^{(k)}, \; \hat{W}^{(k)} = A^H\hat{V}^{(k)}$ \\
 $\alpha^{(k)} =\big((\hat{V}^{(k)})^H W^{(k)}\big)^{-1}\big((\hat{Q}^{(k)})^H Q^{(k)}\big)$ \\
$\hat\alpha^{(k)}=\big(({V}^{(k)})^H \hat W^{(k)}\big)^{-1}\big(({Q}^{(k)})^H\hat Q^{(k)}\big)$ \\
$X^{(k+1)} = X^{(k)} +  V^{(k)}\alpha^{(k)}C^{(k)}$ \\
$Q^{(k+1)}S^{(k+1)}=Q^{(k)}-W^{(k)}\alpha^{(k)}, \; C^{(k+1)}=S^{(k+1)}C^{(k)}$\\
$\hat Q^{(k+1)}\hat S^{(k+1)}=\hat Q^{(k)}-\hat W^{(k)}\hat \alpha^{(k)},\; \hat C^{(k+1)}=\hat S^{(k+1)}\hat C^{(k)}$\\
$\beta^{(k)} = \big((\hat{Q}^{(k)})^HQ^{(k)}\big)^{-1} (\hat S^{(k+1)})^H\big((\hat{Q}^{(k+1)})^HQ^{(k+1)}\big)$ \\
 $\hat{\beta}^{(k)} =\big(({Q}^{(k)})^H\hat Q^{(k)}\big)^{-1}
(S^{(k+1)})^H\big(({Q}^{(k+1)})^H\hat Q^{(k+1)}\big) $\\
 $V^{(k+1)} = Q^{(k+1)} + V^{(k)}\beta^{(k)}, \; \hat{V}^{(k+1)} = \hat{Q}^{(k+1)} + \hat{V}^{(k)}\hat{\beta}^{(k)}$
 }
\caption{Block \bicg with QR factorization of residual block vectors (\blbicgrq)
\label{alg:bl_BiCG_rQ}}
\end{algorithm}

Several remarks are in order: First, in
Algorithm~\ref{alg:bl_BiCG_rQ} we re-used the symbols
$\alpha^{(k)}, \beta^{(k)}$ etc.\ which now designate the quantities
at the right of \eqref{eq:newalpha} - \eqref{eq:newbetahat}. Second,
although the block residuals $R^{(k)}$ are no more available
explicitly in the algorithms, their 2-norms and Frobenius norms can
still be easily retrieved and used in a stopping criterion, since
both these norms are invariant under orthogonality transformations
and thus
\[
\|R^{(k)}\| = \| Q^{(k)} C^{(k)} \| = \| C^{(k)} \|,
\]
$C^{(k)}$ being available from the algorithm. Third,
Algorithm~\ref{alg:bl_BiCG_rQ}
requires more work than Algorithm~\ref{alg:bl_BiCG}, mainly because of
the additional two QR-factorizations of block vectors required in each
iteration. They have at least a cost of %\AF{please check}
$3ns^2 +
\mathcal{O}(ns)$ each (using, e.g., the modified Gram-Schmidt algorithm).
Finally, all block vectors
in Algorithm~\ref{alg:bl_BiCG_rQ} now always have full rank, thus reducing one
source of possible ill-conditioning in $\alpha^{(k)}, \beta^{(k)}$, etc.
In the case of $A$ being Hermitian and positive definite, where
\bicg can be reduced to CG, this is an exhaustive cure to possible
ill-conditioning, see
\cite{Dub01}. In \bicg, however, it can still happen that, e.g.,
$(\hat{Q}^{(k)})^HQ^{(k)}$ or $(\hat{V}^{(k)})^HW^{(k)}$ is ill-conditioned or
singular. This is inherent to the bi-orthogonality condition on which the whole
method is built, and can be avoided only if one is willing to deviate from the
bi-orthogonality condition by, for example, modifying the method using a
look-ahead version of the unsymmetric block Lanczos process.

Relying on the deflation procedure from \cite{deflation1} within a look-ahead
block Lanczos process, the block \qmr method from \cite{Freund97}
addresses possible rank-deficiency in the block residuals as well as
possible further breakdowns related to the bi-orthogonality condition.
In this approach, the basis vectors for the next Krylov subspace are
built one at a time and are then checked for deflation, so that this
approach does not allow to compute matrix-block-vector products.

To conclude this section, we remark that it is possible to, in a
similar spirit, use a QR-factorization of the residuals in the block
\bicgstab method. We do not write down the resulting algorithm,
\blbicgstabrq explicitly, but we will report on its performance in
our numerical experiments. A corresponding variant based on QR
factorization of the search directions performed less well in our
experiments, as was the case in \blbicg.

\section{Numerical experiments} \label{sec:experiments}

The purpose of this section is to assess the efficiency and stability
of the various simultaneous methods. To this end we performed numerical
experiments for five different examples in which systems with several r.h.s.\
arise naturally. These examples will be described in detail below. All iterations were started with $X^{(0)} = 0$.
In all examples we preprocessed the block vector $B$ of r.h.s.\ by computing its QR-factorization, $B=QR$ and then replaced $B$ by $Q$.
The initial shadow block residual was taken equal to the initial residual, $\hat{R}^{(0)} = R^{(0)} \; (= Q)$, except for the economic
versions where we took $\hat{r}^{(0)}$ as the arithmetic mean of all initial residuals.
The stopping criterion was always $\|R^{(k)}\|_F \leq 10^{-10} \cdot \|R^{(0)}\|_F$.

All experiments were run in Matlab on an Intel Core i7-4770 quad core processor.
Most of our Matlab code is based on pre-compiled Matlab functions and routines, e.g.\ for computing sparse matrix-vector products,
factorizations or products of matrices. We therefore trust that the reported timings
are significant with the possible exception of the loop interchanged methods where the explicitly occurring inner loop
over the r.h.s.\ introduces a non-negligible part of interpreted code, thus increasing the compute time.

For our Matlab experiments, the ratio $a$ of the time required to perform $s$ single matrix-vector multiplications and that for a matrix-block vector multiplication with block size $s$ ranges between $1.2$ and $2.7$.
The benefit of working with block vectors is thus substantial.
In our case, this gain is primarily due to the fact that Matlab uses all cores for block vectors, whereas it uses only one core for a single vector.
On other processors and with other compilers similar gains do occur albeit sometimes for different reasons. We refer to \cite{welleinetal2015} for a 
recent publication where the benefits of matrix-block vector multiplications in the presence of limited memory bandwidth are investigated based on 
an appropriate ``roofline'' performance model.   

The additional work to be performed in the block methods
(see Proposition~\ref{prop:cost}) is of the order of $\mathcal{O}(s^2)$ vector operations,
$s$ the number of right hand sides. This must be considered relative to the work
required for $s$ matrix-vector products, which amounts to $s \cdot \mathrm{nnz}/n$ vector operations,
$\mathrm{nnz}$ being the number of non-zeros in $A$. Therefore, the ratio
\begin{equation} \label{eq:rho}
\rho = \frac{s^2}{s \cdot \mathrm{nnz} /n} = s \cdot \frac{n}{\mathrm{nnz}}
\end{equation}
can be regarded as a quantity measuring the (relative) additional cost for $s$ r.h.s.\ in the
block methods. If $\rho$ is large, the additional arithmetic cost is substantial.

Notwithstanding the detailed description of our examples below, Table~\ref{tab:rhoetc} reports the
values of the various quantities just discussed for these examples. For Examples~\ref{ex:ex1}-\ref{ex:ex4} we
will also report results using ILU preconditioning.
Besides from accelerating the convergence, preconditioning increases the cost of a matrix-(block)-vector multiplication
which is now a preconditioned matrix-block vector multiplication. This affects the ratio $a$ and reduces the ratio $\rho$.
Table~\ref{tab:rhoetc} therefore gives values for the unpreconditioned and the preconditioned cases.
Since none of the standard preconditioners was effective for Example~\ref{ex:ex5},
we only report results for the unpreconditioned case there.

\begin{table}
\caption{\label{tab:rhoetc}Number of r.h.s.\ $s$, measured acceleration factor $a$ (ratio of the time for $s$ matrix-vector multiplications to the time for one matrix-block vector multiplication with block size $s$), and value of $\rho$ from \eqref{eq:rho} for the numerical examples.}
\begin{center}
\addtolength\tabcolsep{-2.0pt}
\begin{tabular}{|r|cc|cc|cc|cc|c|} \hline
        & \multicolumn{2}{|c|}{Ex.~\ref{ex:ex1}}   &  \multicolumn{2}{|c|}{Ex.~\ref{ex:ex2}} &  \multicolumn{2}{|c|}{Ex.~\ref{ex:ex3}} &  \multicolumn{2}{|c|}{Ex.~\ref{ex:ex4}} & Ex.~\ref{ex:ex5}   \\
        & no prec. & prec.          & no prec. & prec.           & no prec. & prec.           & no prec. & prec.           & no prec. \\ \hline
 $s$    & \multicolumn{2}{|c|}{$4$} & \multicolumn{2}{|c|}{$19$} & \multicolumn{2}{|c|}{$19$} & \multicolumn{2}{|c|}{$12$} & $20$     \\
 $a$    & $2.65$   & $1.21$         & $2.66$   & $1.69$          & $2.60$   & $1.71$          & $1.57$   & $1.18$          & $1.38$   \\
 $\rho$ & $0.8$    & $0.36$         & $2.76$   & $1.29$          & $2.76$   & $1.29$          & $0.24$   & $0.14$          & $5.00$   \\ \hline
\end{tabular}
\addtolength\tabcolsep{2.0pt}
\end{center}
\end{table}

\subsection{Description of examples} We considered the following five examples.

\begin{example} \label{ex:ex1}
This is a variation of an example which was given in \cite{Reichel92}
in the context of Krylov subspace type methods for the Sylvester equation. We
consider the elliptic pde
\[
 -u_{xx} - u_{yy} +2\alpha_1u_x +2\alpha_2u_y - 2\alpha_3u = 0,
\]
on the unit square with Dirichlet boundary conditions, where
$\alpha_1=\alpha_2=\alpha_3=5$. We discretize using finite differences on an equispaced 
grid with $200 \times 200$ interior grid points.
We build a total of four r.h.s. associated with the four corners of $\Omega$.
For a given corner, we construct the r.h.s.\ such that it represents the boundary condition
of $u$ being continuous and piecewise linear
on the boundary  with value 1 at that corner and value 0 at all the other corners. From the four solutions
to these four r.h.s.\ we can thus, by superposition,
retrieve the solutions for any continuous piecewise linear boundary values for $u$.

This is an example of an elliptic pde with mild convection, the value for $\rho$ from \eqref{eq:rho} is
relatively small, $\rho = 0.8$.
\end{example}

\begin{example} \label{ex:ex2}
We consider the three-dimensional advection
dominated elliptic pde on the unit cube
\begin{equation} \label{eq:ex1}
 u_{xx} + u_{yy} + u_{zz} + \nu \cdot u_x = f, \enspace (x,y,z) \in \Omega =
(0,1)^3,
\end{equation}
where $\nu >0$ is a parameter which controls the influence of the
convection term and $f$ is defined such that for zero Dirichlet boundary
conditions the exact solution is $u(x,y,z) =
\exp(xyz)\sin(\pi x)\sin(\pi y)\sin(\pi z)$, see
\cite{Sle93} as an example where \bicgstab encounters convergence
problems for larger values of $\nu$.
We discretize \eqref{eq:ex1} using finite differences
on an equispaced grid with $50^3$ interior grid points. This results in a
linear system of size  $n=125,\!000$ with seven non-zeros per row. We generate
one r.h.s.\ by imposing zero Dirichlet boundary conditions on all faces of
$\Omega$. In order to
be able to (by superposition) retrieve solutions for any Dirichlet boundary
conditions which depend piecewise affinely on $x,y$ and
$z$ on any of the six faces of $\Omega$, we generate a total of 18 additional
r.h.s.\ with $f=0$ in \eqref{eq:ex1}. To be specific,
for the face with $x=0$, e.g., we set the boundary
conditions equal to $0$ on all other faces and obtain three r.h.s.\ by setting
the boundary condition on that face equal to $y$, to $z$ and to the constant
$1$, respectively, and similarly for the five other faces.
Our choice for $\nu$ is $\nu = 1,\!000$, as in \cite{Sle93}, resulting in a
convection dominated equation and a highly
non-symmetric system matrix. The value for $\rho$ from \eqref{eq:rho} is significantly larger than
in the first example, $\rho \approx 2.7$.
\end{example}

\begin{example}\label{ex:ex3}%
Same as Example~\ref{ex:ex1}, but now with $\nu = 10$. This means that the matrix is ``almost'' symmetric.
\end{example}

\begin{example}\label{ex:ex4}%
This example arises from quantum chromodynamics (QCD),
the physical theory of the strong interaction between the quarks as
constituents of matter. The Wilson-Dirac matrix $D_W$ in lattice QCD represents
a discretization of the Dirac operator from QCD, see \cite{FroKaKrLeRo13}, e.g.
One has $D_W = I - \kappa D$ with $D$ representing a periodic nearest neighbor coupling on a four-dimensional
lattice, and there are 12 variables per lattice point, one for each possible
combination of four spin and three color indices. The couplings are defined via
a gluon background field which is drawn from a statistical distribution
and therefore varies irregularly from one lattice point to the next. A
typical ``propagator computation'' in lattice QCD amounts to solve
\[
D_W x_i = e_i, \enspace i=1,\ldots,12,
\]
where $e_1,\ldots,e_{12}$ are the first $s=12$ unit vectors (corresponding to
the twelve variables at one lattice point).
For our computation we took a Wilson-Dirac matrix
based on a $8^4$ lattice and thus has dimension $12\cdot 8^4 = 49,\!152$.
Our configuration corresponds to a typical configuration in lattice QCD with temperature parameter $\beta=5.6$ and
coupling parameter $\kappa=0.16066$  
corresponding to run $A_3$ in \cite{DebbioGiustiLuscherEtAl2007,DebbioGiustiLuscherEtAl2007a}.
Similar matrices can be found e.g.\ as \texttt{conf5\_4-8x8-20} and \texttt{conf6\_0-8x8-80} from the University of Florida Sparse Matrix Collection \cite{DavisHu2011}.
The entries of $D_W$ are complex numbers, the matrix is non-symmetric with its
spectrum being symmetric to the real axis and located in the right half plane.
There are 49 non-zeros per row in $D_W$. This is our example with the smallest value for $\rho$ from
\eqref{eq:rho}, $\rho \approx 0.25$.
\end{example}

\begin{example}\label{ex:ex6}\label{ex:ex5}%
The FEAST method to compute some eigenpairs \cite{Polizzi2009} for the
generalized eigenvalue problem $Ax = \lambda Bx$ evaluates the contour integrals
  $\oint_\Gamma (A - t B)^{-1} BY dt$
where $\Gamma$ is a circle that contains the eigenvalues of the eigenpairs that
are to be computed and $Y\in \mathC^{n\times m}$ consists of $m$ randomly chosen
columns $y_i$.
Using numerical quadrature for the integral, this means that one has to solve
several linear systems
  $(A - t_j B) x_i = y_i$
for a given choice of the quadrature nodes $t_j$.
For our example, we took $A$ as the matrix
stemming from a model for graphene belonging to the parameter $W=200$ in
\cite{GalgonKraemerEtAl2014} with $n = 40,\!000$, and $B=I$. There, the eigenpairs corresponding to
the eigenvalues of smallest modulus are sought, i.e. $\Gamma$ is
centred at $0$ with a radius of $0.125$. We solve the systems
  $(A - tI) x_i = y_i$ for $t=-0.0919 + 0.0848i$
(which corresponds to $z_3$ in \cite{GalgonKraemerEtAl2014}) and $20$ random right-hand sides $y_i$.
Here, the value for $\rho$ from \eqref{eq:rho} is $\rho = 5$, the largest value in our examples.

Let us note that this example lends itself to a ``shifted'' Krylov subspace approach where systems are
simultaneously solved for various values of $t_j$, but this is less relevant in our context.
\end{example}

\subsection{Stabilization of block \bicg and block \bicgstab}
We first report on a comparison of the block \bicg method,
Algorithm~\ref{alg:bl_BiCG} and block \bicgstab from
\cite{Jbilou05} with the versions which improve stability using a
QR-factorization of the block residuals. For block \bicg, this version
is given explicitly in Algorithm~\ref{alg:bl_BiCG_rQ}.

\begin{figure}
\includegraphics [angle=0,width=.5\textwidth]{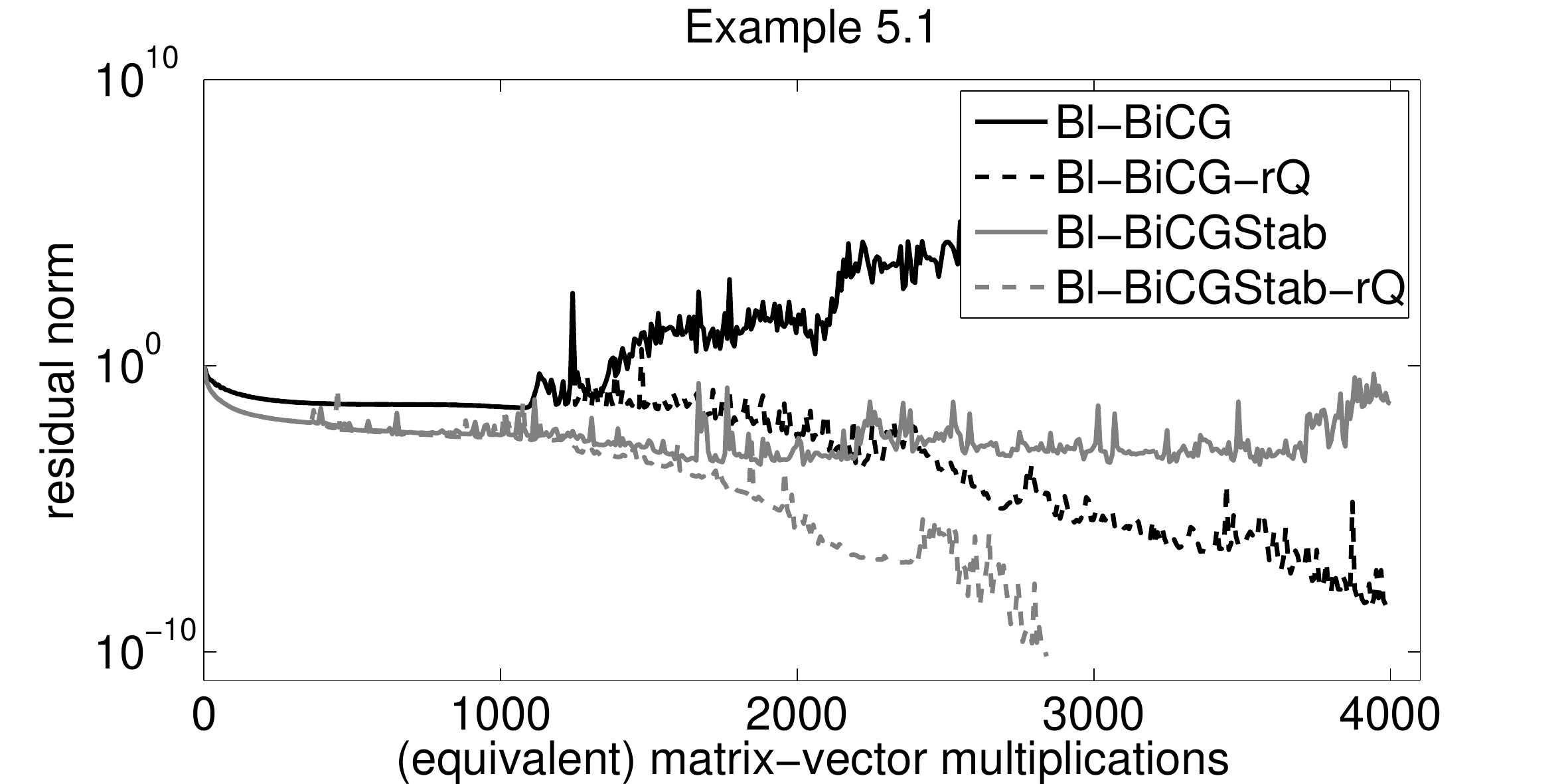}%
\includegraphics [angle=0,width=.5\textwidth]{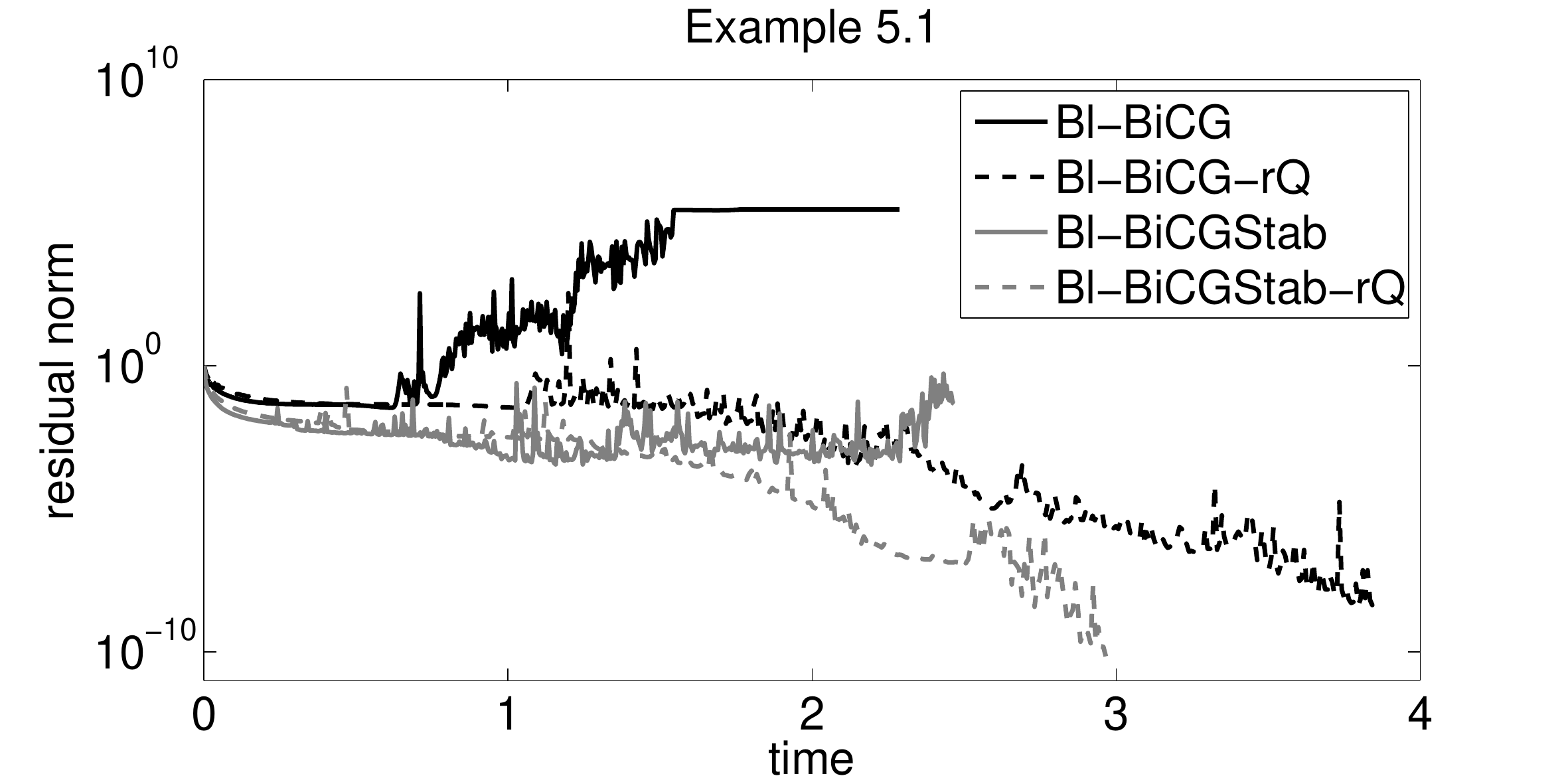}

\vspace*{0.5em}

\includegraphics [angle=0,width=.5\textwidth]{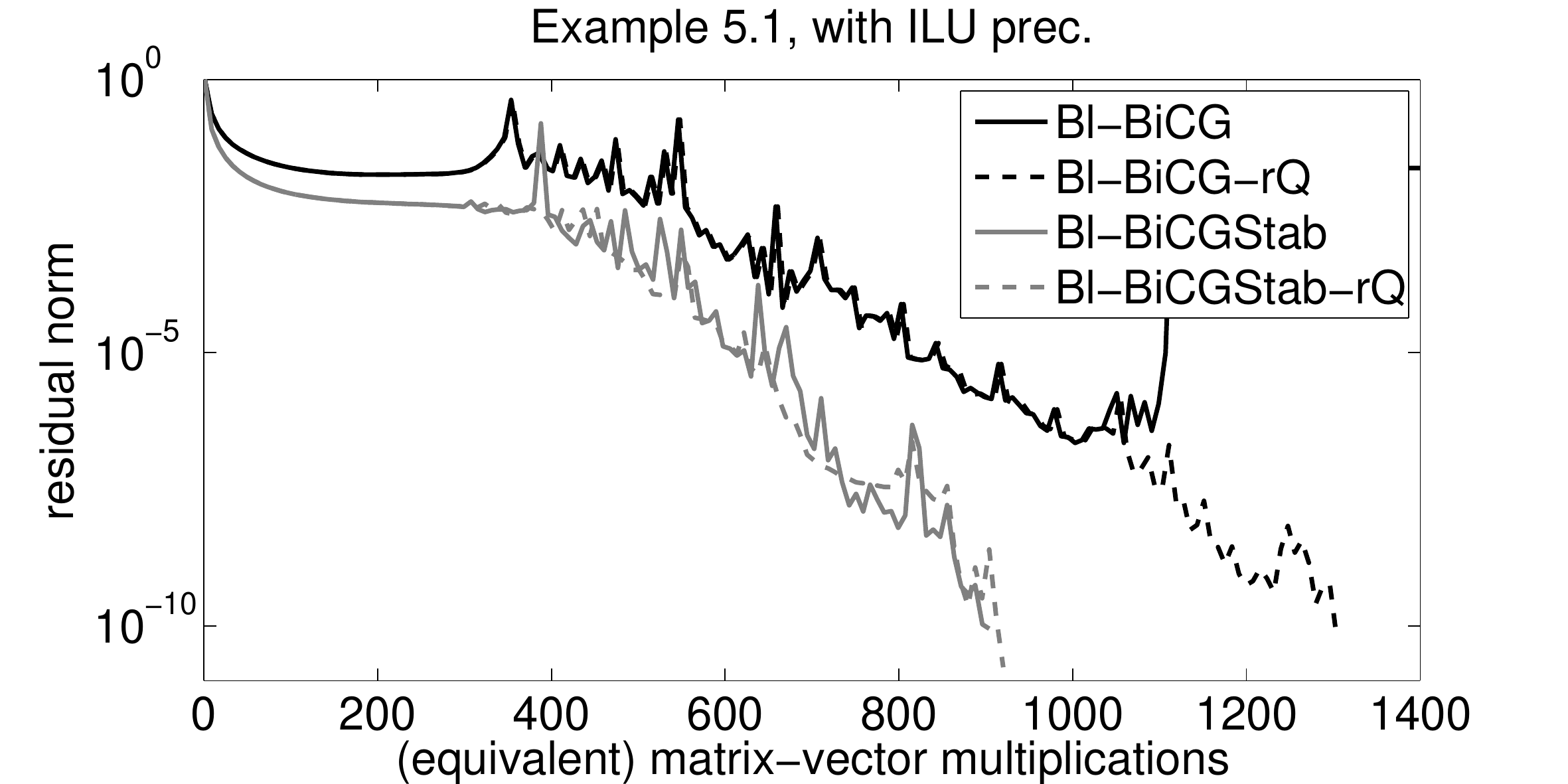}%
\includegraphics [angle=0,width=.5\textwidth]{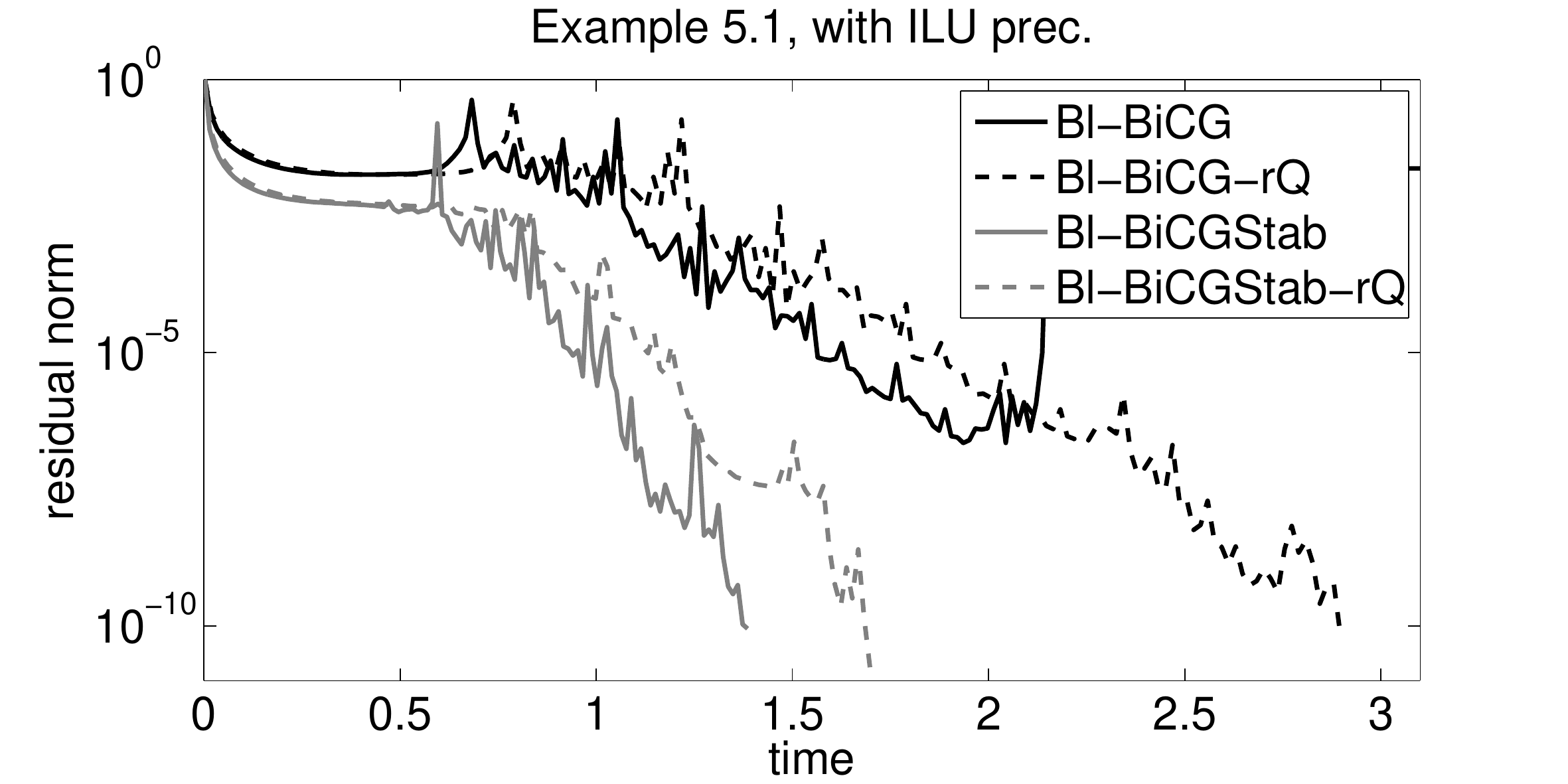}
\caption{Convergence plots for \blbicg and \blbicgstab and their
variants using a QR-factorization of the block residual. Top:
Example~\ref{ex:ex1}, bottom: Example~\ref{ex:ex1} with ILU preconditioning.\label{fig:block_methods}}
\end{figure}

Figure~\ref{fig:block_methods}
shows convergence plots for Example~\ref{ex:ex1} without preconditioning (top row) and
with a (right) no-fill ILU preconditioner (bottom row) obtained via the Matlab function
\texttt{ilu}. The left plots show the
relative Frobenius norm of the block residual as a function of the total number
of matrix-vector multiplications (there are $2s$ of those per iteration). The
right plots show the same residual norms, but now as a function of wall clock
time.
Figure~\ref{fig:block_methods} shows that both, \blbicg as well as
\blbicgstab, can significantly improve when the QR factorizations are used,
sometimes making iterations converge which otherwise would diverge. In the
presence of preconditioning, where the preconditioned matrix is better
conditioned and convergence is reached after a relatively small number of
iterations, QR factorizations has a less pronounced effect, and standard \blbicgstab
actually now converges, too. Since computing a QR-factorization of a block of $s$ vectors of
length $n$ requires an arithmetic cost of $\mathcal{O}(s^2)$ vector operations, this cost is
substantial unless $\rho$ is really small. From the plots in the right column of
Figure~\ref{fig:block_methods} we indeed see that this additional cost is important and that
more than outweighs the small gains in terms of the number of matrix-vector
multiplications.

\begingroup
\setlength{\tabcolsep}{4pt}
\begin{table}[b!]
\caption{Iteration counts \#it, wall clock times and final residual
norms $\| R \|$ for \blbicg and its variant with QR-factorization of
the block residual (top two rows), and their counterparts for
\blbicgstab (bottom two rows). \label{tab:block_methods} }
\begin{footnotesize}%\setlength{\tabcolsep}{0.3ex}
\begin{center}
\addtolength\tabcolsep{-1.7pt}
\begin{tabular}{|lll|lll|lll|lll|lll|}
 \hline
 \multicolumn{15}{|c|}{\em variants of \blbicg (first two rows) and \blbicgstab (last two rows), \bf{no precond.\ }} \\ \hline \hline
  \multicolumn{3}{|c}    {Example \ref{ex:ex1}}  &\multicolumn{3}{|c}  {Example \ref{ex:ex2}}    & \multicolumn{3}{|c}
  {Example \ref{ex:ex3}}& \multicolumn{3}{|c}    {Example \ref{ex:ex4}} & \multicolumn{3}{|c|}    {Example \ref{ex:ex5}}
  \\
  \#it & time  & $\|R\|$ & \#it & time   & $\|R\|$ & \#it & time   & $\|R\|$ & \#it & time   & $\|R\|$ & \#it & time   & $\|R\|$ \\ \hline
  500  & 2.28s & div.\    & 500  & 43.83s & div.\  & 500  & 44.99s & 0.28    & 500  & 62.92s & 1e-03   & 528  & 36.49s & 3e-10   \\
  500  & 3.85s & 1e-08    & 500  & 101.1s & div.\  & 156  & 31.66s & 2e-10   & 237  & 38.85s & 3e-10   & 533  & 79.86s & 3e-10   \\ \hline
  500  & 2.46s & 0.093    & 500  & 45.15s & div.\  & 111  & 9.73s  & 2e-10   & 175  & 20.66s & 2e-10   & 1200 & 77.72s & 2e-3    \\
  355  & 2.96s & 1e-10    & 500  & 91.17s & div.\  & 101  & 18.10s & 2e-10   & 157  & 25.07s & 2e-10   & 957  & 131.7s & 3e-10   \\\hline
  \multicolumn{15}{c}{}\\\hline
  \multicolumn{15}{|c|}{\em variants of Bl-BiCG (first two rows) and Bl-BiCGStab (last two rows), \bf{ILU precond.\  } } \\ \hline \hline
  \multicolumn{3}{|c}    {Example \ref{ex:ex1}}  &\multicolumn{3}{|c}  {Example \ref{ex:ex2}}    & \multicolumn{3}{|c}
  {Example \ref{ex:ex3}}& \multicolumn{3}{|c}    {Example \ref{ex:ex4}} & \multicolumn{3}{|c|}    {\mbox{}} \\
  \#it &  time & $\|R\|$ & \#it &  time & $\|R\|$ & \#it &  time & $\|R\|$ & \#it &  time & $\|R\|$  &  &  & \\ \hline
   500 & 7.72s & 0.04  & 28 & 6.13s & 2e-10 & 50 & 11.35s & 3e-10 & 68  & 23.33s & 2e-10 &&&  \\
   163 & 2.89s & 1e-10 & 28 & 9.18s & 8e-11 & 50 & 17.06s & 3e-10 & 62  & 23.72s & 3e-10 &&& \\ \hline
   113 & 1.38s & 3e-10 & 16 & 2.74s & 4e-11 & 33 & 5.87s  & 6e-11 & 41  & 10.15s & 2e-10 &&&\\
   115 & 1.69s & 3e-11 & 16 & 4.09s & 3e-11 & 33 & 8.84s  & 5e-11 & 40  & 11.72s & 7e-11 &&&\\ \hline
\end{tabular}
\addtolength\tabcolsep{1.7pt}
\end{center}
\end{footnotesize}
\end{table}
\endgroup

Table~\ref{tab:block_methods} summarizes the results
for all our examples. For all four methods, it reports the number of
matrix-vector multiplications and the time needed to reach the stopping
criterion (reduction of the initial block residual by a factor of $10^{-10}$ or
maximum of $500$ iterations reached in Examples~\ref{ex:ex1}-\ref{ex:ex4}, $1,\!200$ for Example~\ref{ex:ex5}). 
We also report the final relative norm
of the block residual which was explicitly re-computed upon termination.
If this final residual norm is larger than 1, we interpret this as divergence, noted as ``div.'' in the table.
Smaller residual norms like $10^{-4}$ may
be interpreted as an indicator of slow convergence, too slow to be competitive
within our setting.

Table~\ref{tab:block_methods} confirms our conclusions from
the discussion of Figure~\ref{fig:block_methods}: QR-factoriz\-ation improves
numerical stability, and it has the potential to turn otherwise
divergent iterations into convergent ones. Its (relative) additional cost is
indicated by the value of $\rho$ from \eqref{eq:rho}, and it is 
relatively small for Examples~\ref{ex:ex1} and \ref{ex:ex4}, whereas it is relatively high for 
the other examples where it thus does not pay off in the cases where the non-stabilized
method is already convergent. The known convergence problems of BiCGStab for 
Example~\ref{ex:ex2}, see \cite{Sle93}, carry over to both variants of the block method, 
and we may conclude that the non-convergence is not a matter of
numerical stability but of \bicgstab and its block counterpart not being able 
to efficiently accommodate the relevant spectral properties
of the respective matrix; see also the discussion in \cite{Sle93}.

\subsection{Comparison of all methods}

\begin{figure}
\includegraphics [angle=0,width=.5\textwidth]{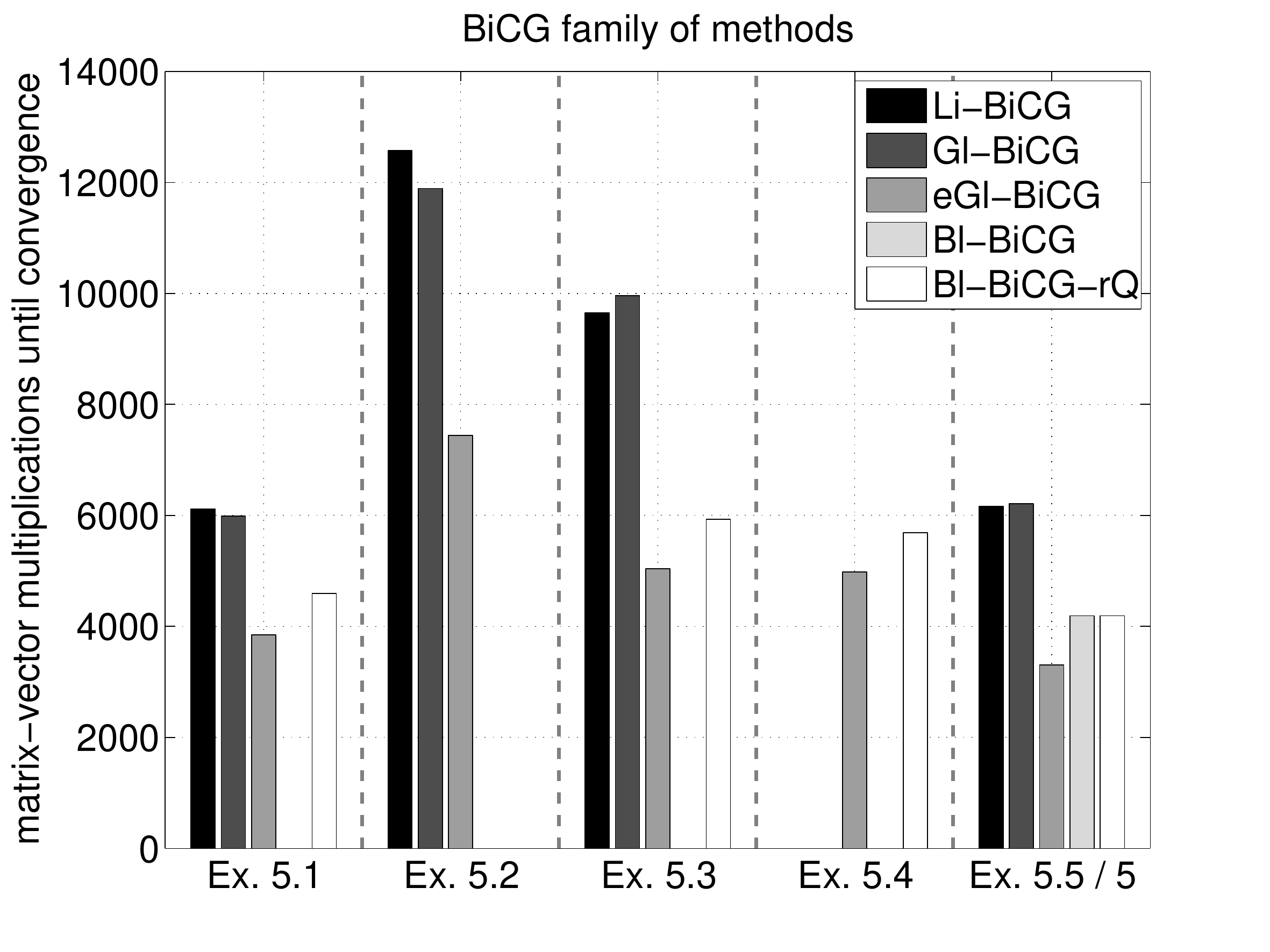}%
\includegraphics [angle=0,width=.5\textwidth]{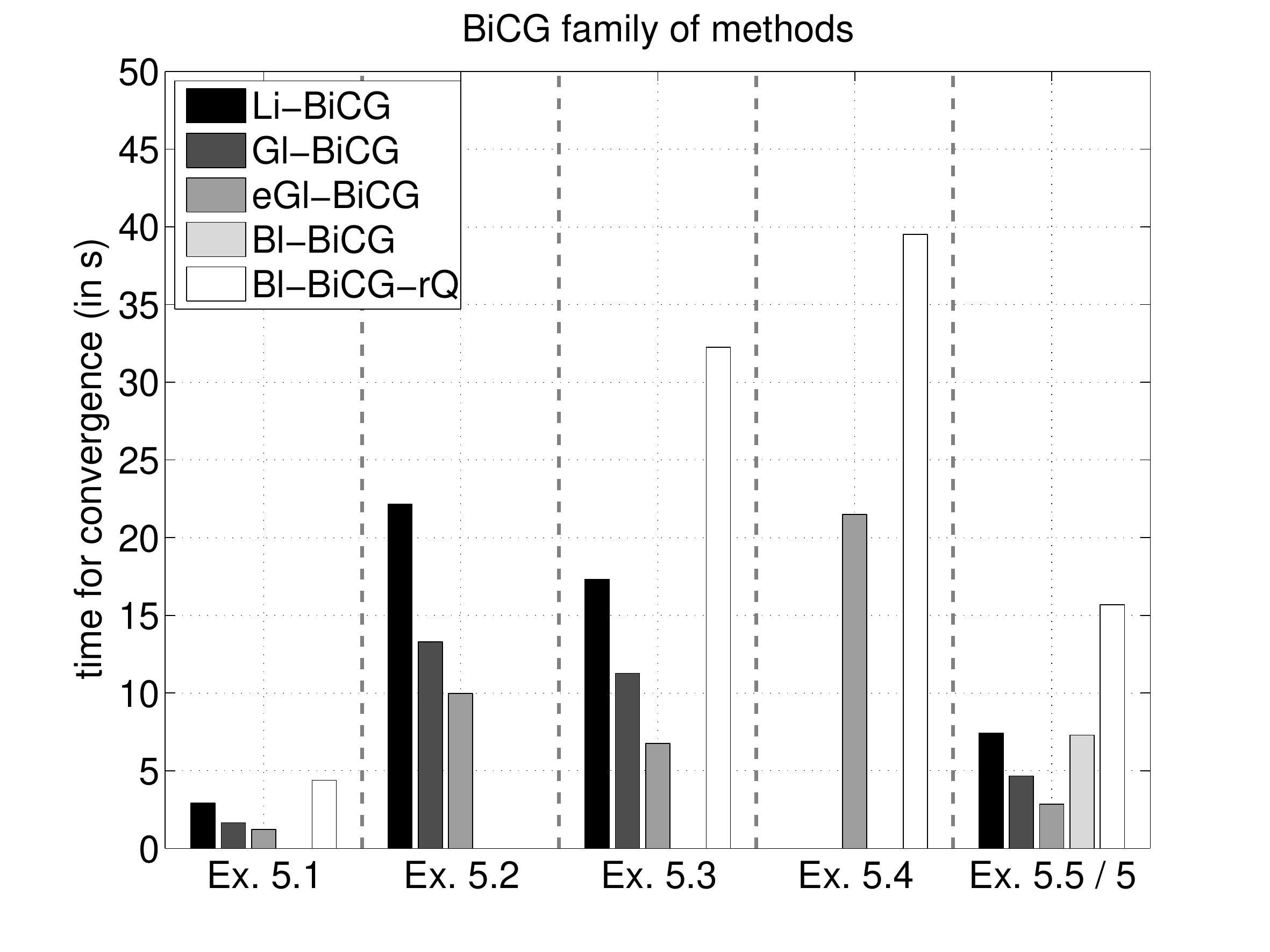}

\includegraphics [angle=0,width=.5\textwidth]{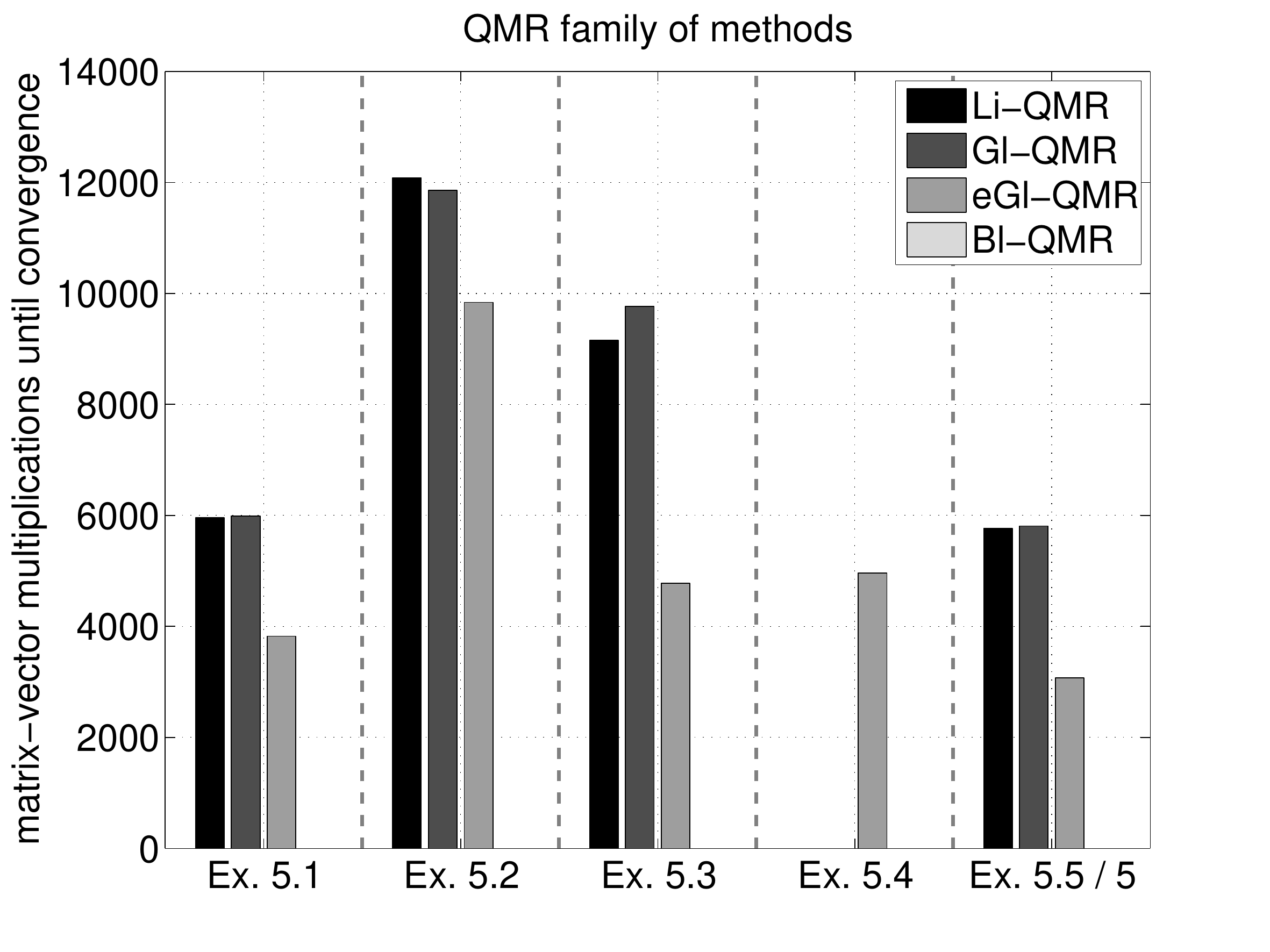}%
\includegraphics [angle=0,width=.5\textwidth]{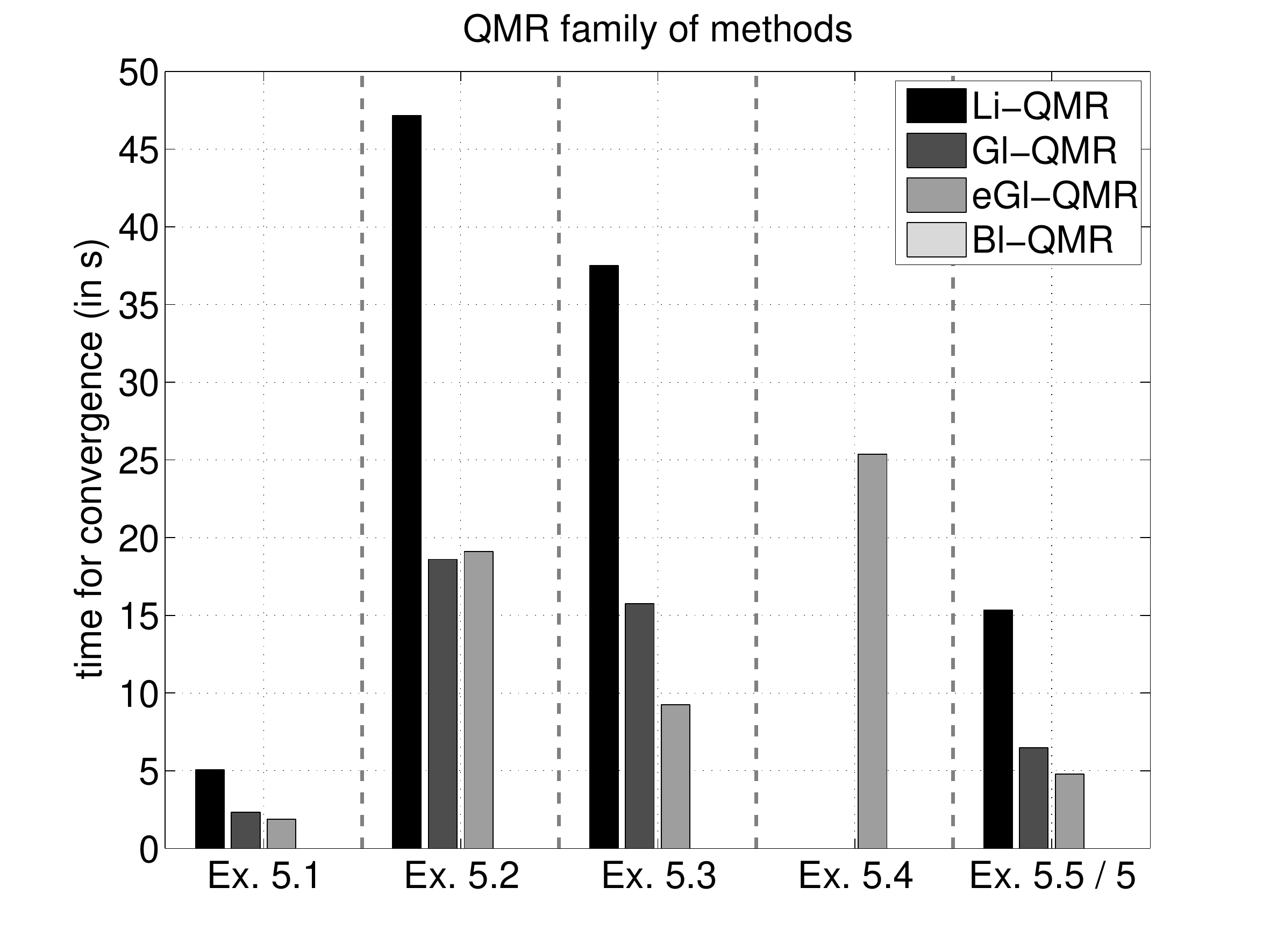}

\includegraphics [angle=0,width=.5\textwidth]{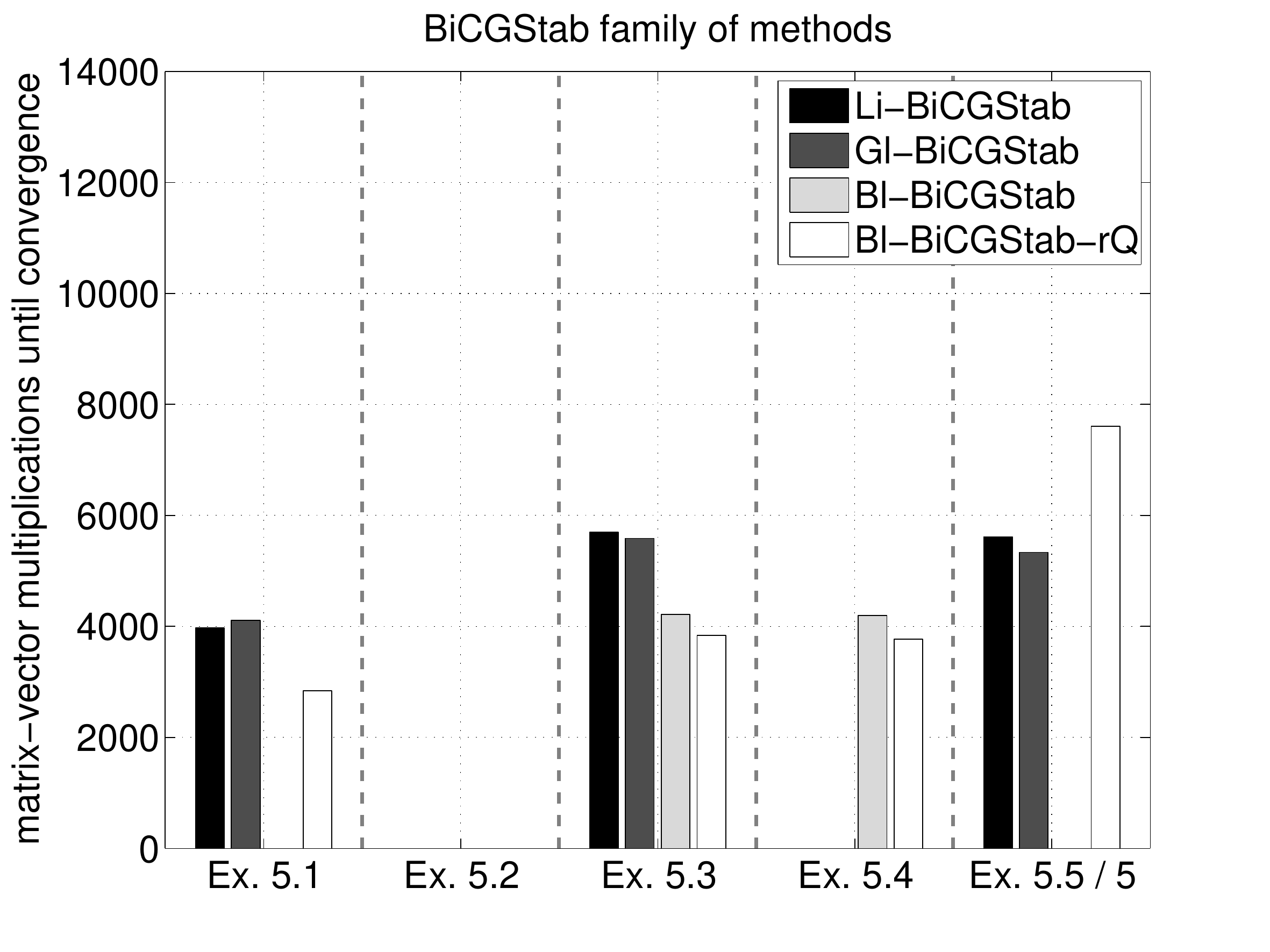}%
\includegraphics [angle=0,width=.5\textwidth]{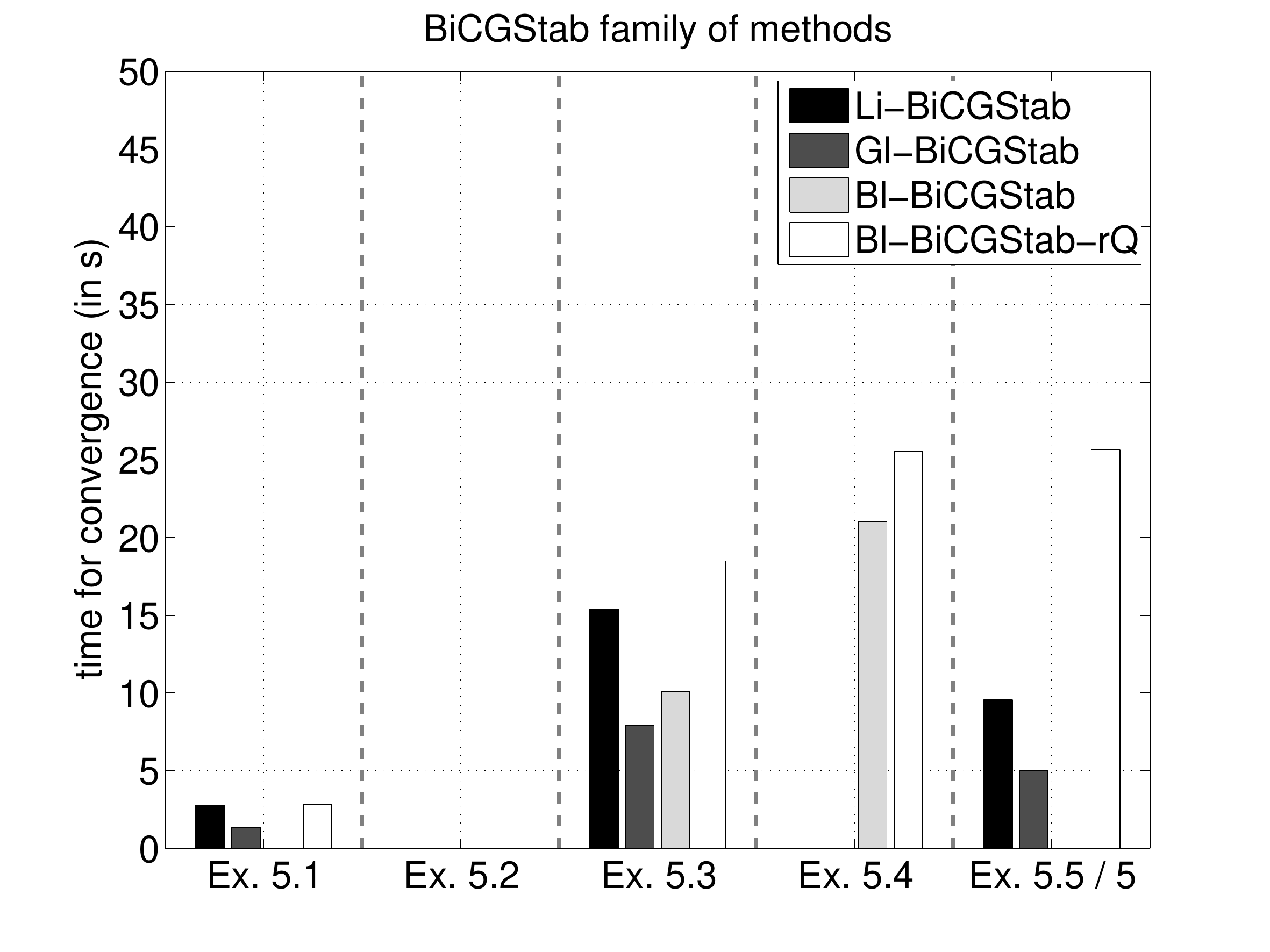}
\caption{Number of matrix-vector multiplications and time for all methods and all examples, no preconditioning.
\label{fig:all_methods_noprec}}
\end{figure}

We proceed by comparing the number of matrix-vector multiplications and the wall clock times for all methods.
Figure~\ref{fig:all_methods_noprec} does so for the case without preconditioning. We use bar plots to allow for an immediate visual comparison
of the methods and we group our measurements by ``families'' of methods. Since Example~\ref{ex:ex5} needs much more iterations than the other examples,
we scaled the bar plots for this example by dividing by 5.
A missing vertical bar indicates that the corresponding method either diverged or stagnated at a large residual norm.
Instead of implementing a \blqmr method by ourselves we used a Matlab implementation by Freund and Malhotra which was publicly available at least until 2008 at the web site of
Lucent technologies. This implementation
is very cautious about possible ill-conditioning and (near) deflation, and for this reason it actually does all matrix-vector products one at a time, i.e.\
it does not use matrix-block vector products. As a result, this block QMR implementation can not be competitive with respect to timings, and for the unpreconditioned
systems considered here we obtained premature break downs in all examples. The situation becomes slightly different in the preconditioned case, see below.

We can make the following observations: The loop-interchanged and the global methods require almost the same number of matrix-vector 
multiplications in all examples. Since the other arithmetic work is also comparable, this should result in similar wall clock times. This is, 
however, not what we see, the loop interchanged methods requiring substantially more time. We attribute this to the fact that the loop 
interchanged methods are the only ones were there is a non-negligible amount of interpreted Matlab code. ``Plain'' block methods suffer from 
non-con\-ver\-gen\-ce in a significant number of cases (8 out of 10). Using a QR-factorization of the residuals reduces the number of 
failures to 2. When they work, the block methods reduce the number of matrix-vector multiplications as compared to loop-interchanged or 
global methods by never more than a factor of 2. Because of the additional arithmetic cost, block methods never win in terms of wall clock 
time, and for larger values of $\rho$ (Examples~\ref{ex:ex2}, \ref{ex:ex3} and \ref{ex:ex5}) they require more than twice the time than the 
corresponding global method across all ``families''.     
The economic versions of the global methods always appear as the best within their respective ``family''. They do not exhibit convergence
problems, and they reduce the number of matrix-vector multiplications when compared with the global and loop interchanged methods. At the same time, their wall clock times are smallest. Overall, the economic global BiCG method appears to be the most efficient one, in tie with global BiCGStab for Example~\ref{ex:ex1} and block BiCGStab for Example \ref{ex:ex4}. Note that these are the examples with the smallest value for $\rho$.

\begin{figure}
\includegraphics [angle=0,width=.5\textwidth]{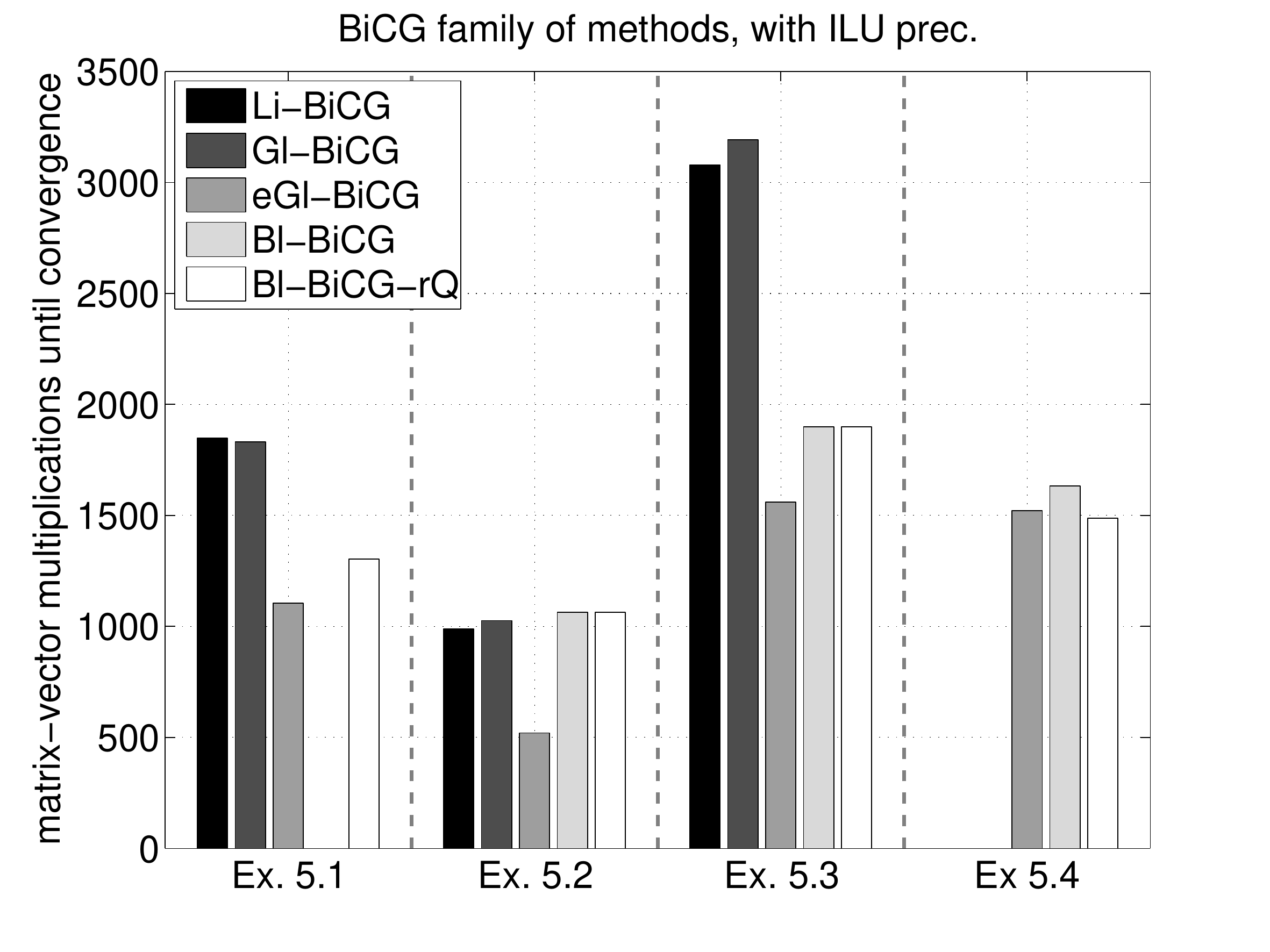}%
\includegraphics [angle=0,width=.5\textwidth]{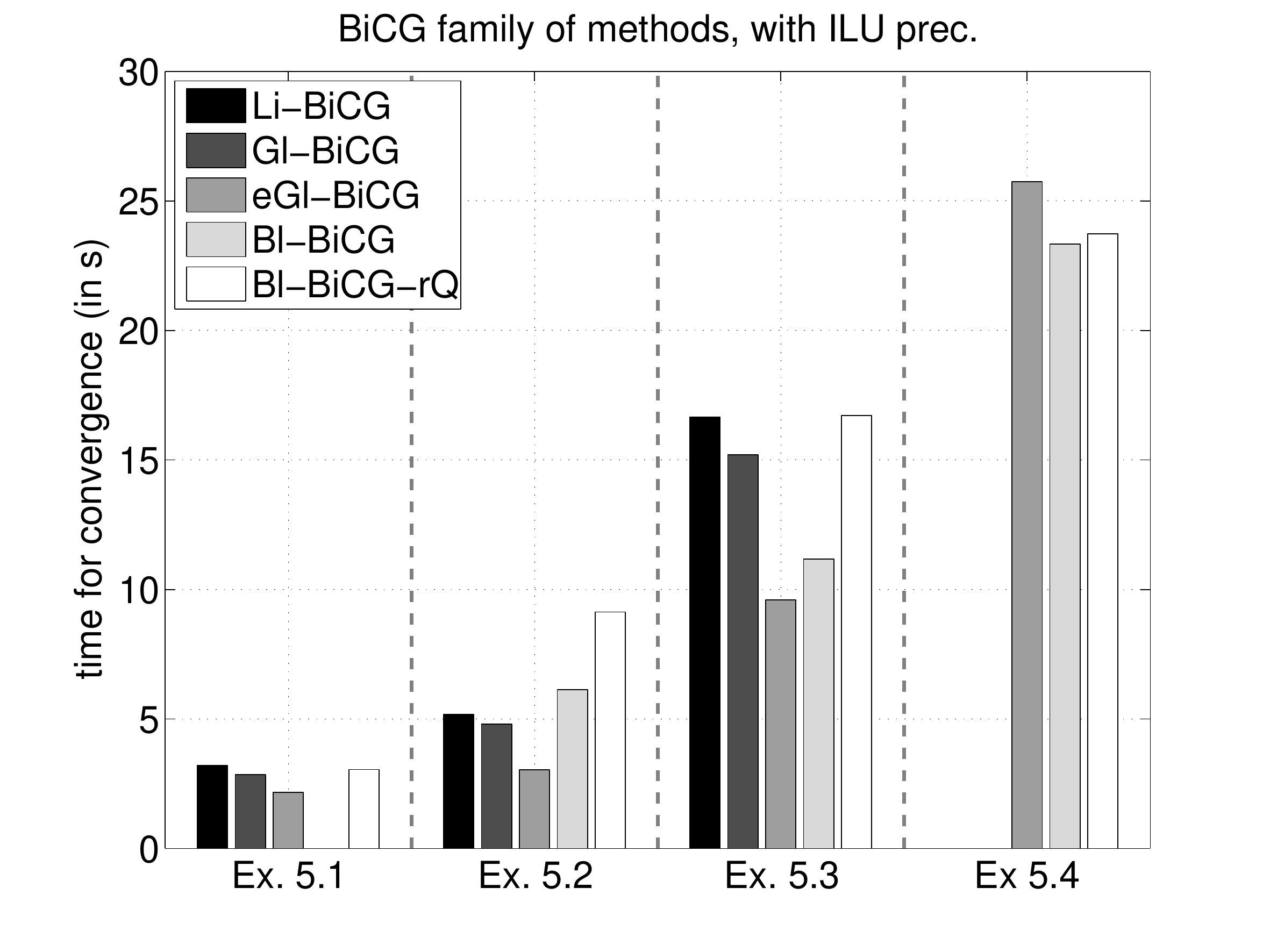}

\includegraphics [angle=0,width=.5\textwidth]{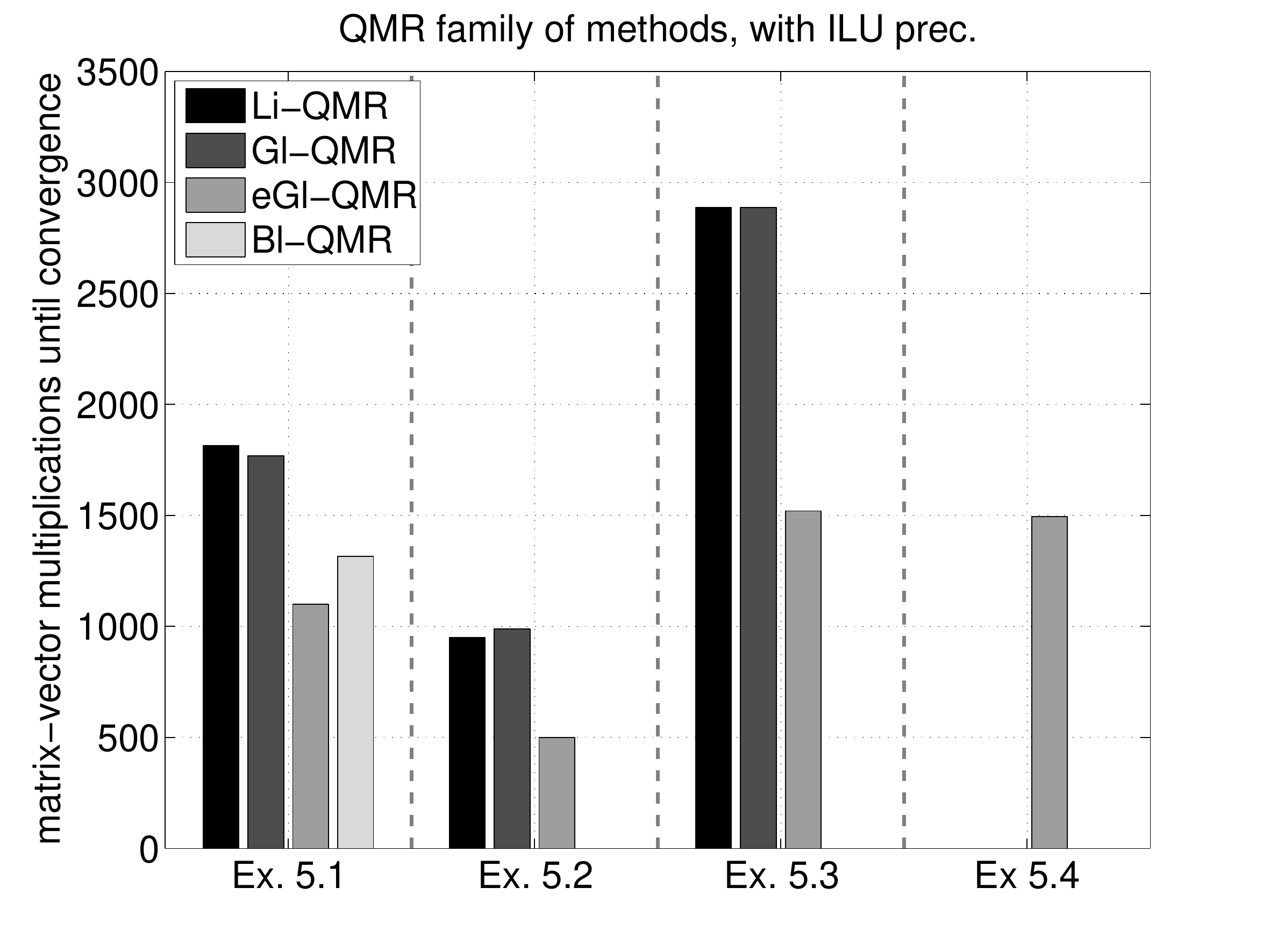}%
\includegraphics [angle=0,width=.5\textwidth]{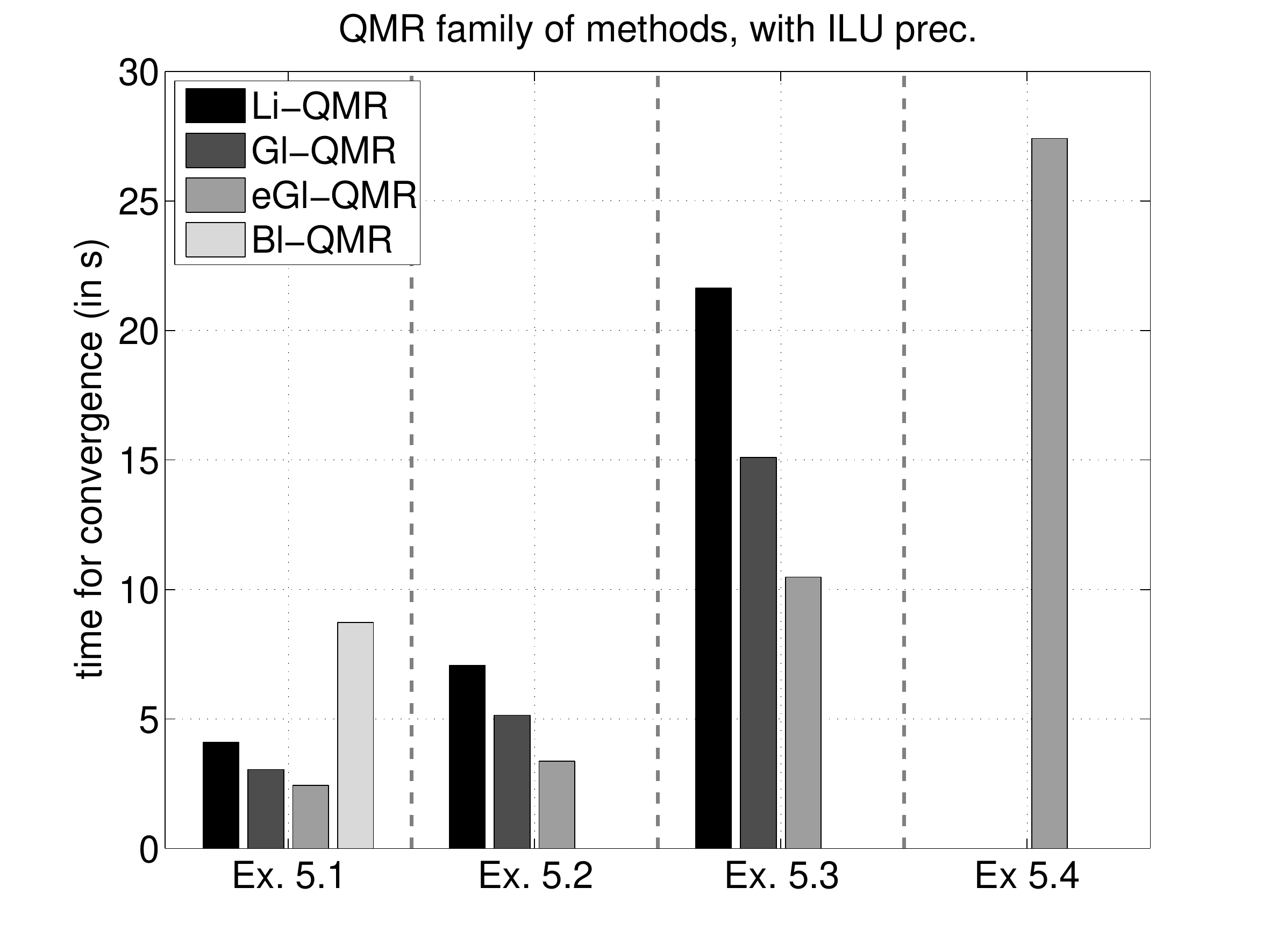}

\includegraphics [angle=0,width=.5\textwidth]{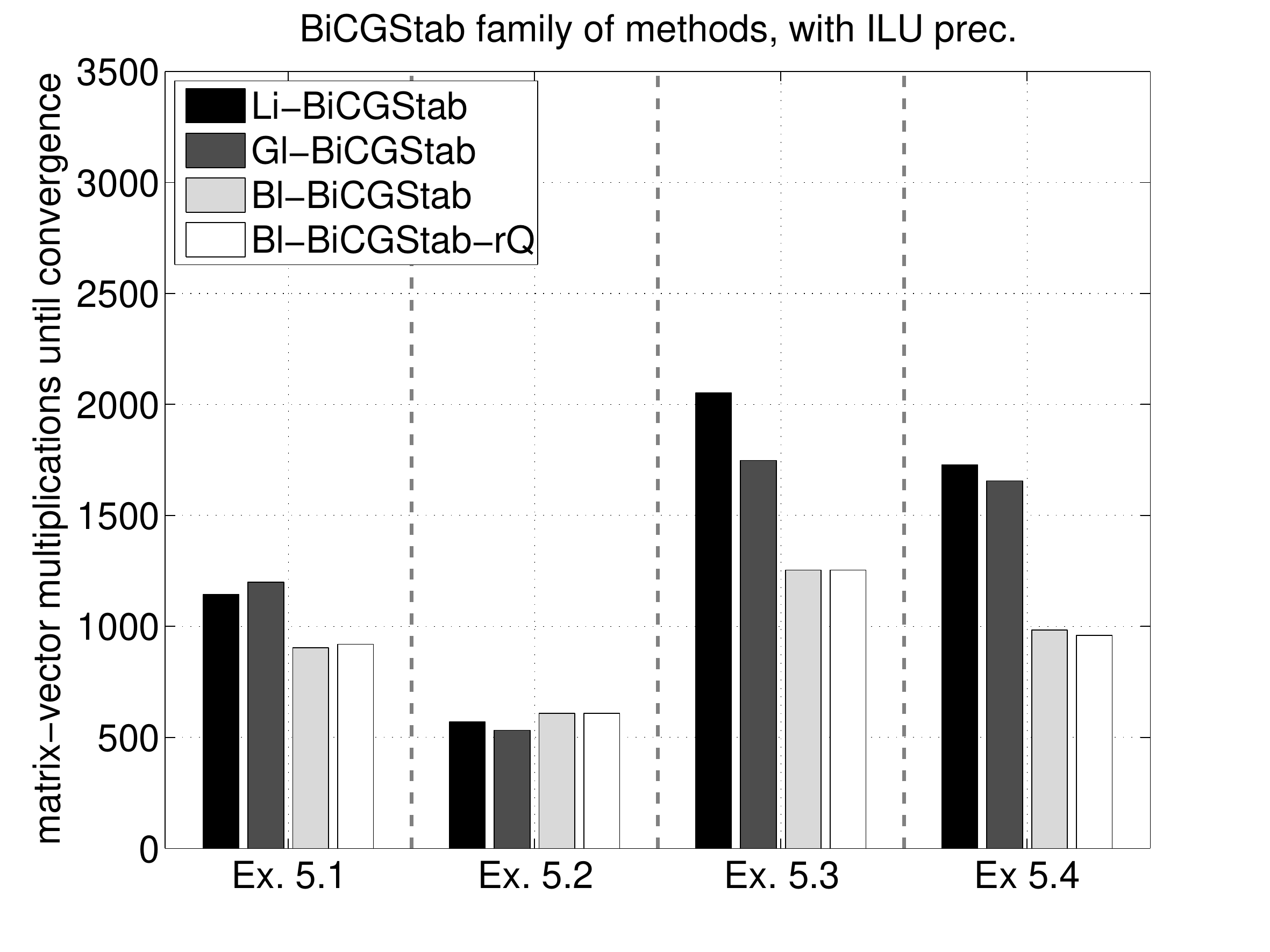}%
\includegraphics [angle=0,width=.5\textwidth]{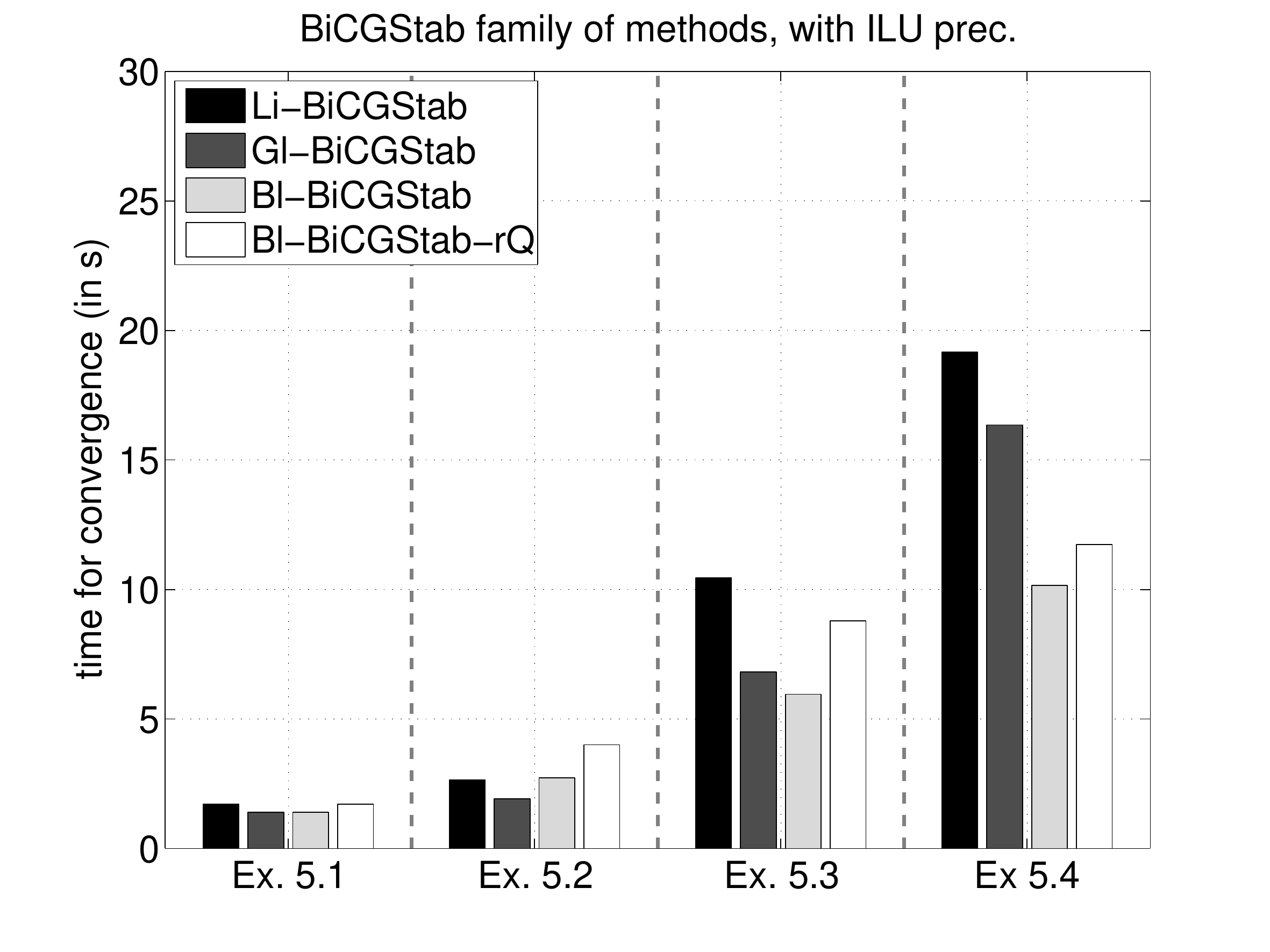}
\caption{Number of matrix-vector multiplications and time for all methods, Examples~\ref{ex:ex1}-\ref{ex:ex4} with ILU preconditioning.
\label{fig:all_methods_prec}}
\end{figure}

Figure~\ref{fig:all_methods_prec} shows the results for Examples~\ref{ex:ex1}-\ref{ex:ex4} where we now use ILU (right) preconditioning. 
More precisely,
we used Matlab's \texttt{ilu} do obtain a no-fill ILU in Examples~\ref{ex:ex1}-\ref{ex:ex3} and an ILU  with threshold and pivoting with a 
drop tolerance of $5 \cdot 10^{-2}$ in
Example~\ref{ex:ex4}. The number of matrix-vector multiplications and the wall clock times decrease for all methods as compared
to the un-preconditioned case.
Block QMR now converges for Example~\ref{ex:ex1}, but its wall-clock time is not competitive for the reasons explained earlier. The other block methods fail only once for BiCG, and never for BiCGStab. While the economic versions of the global methods still yield the best timings within their ``family'', the block BiCGStab methods and global BiCGStab now perform better in terms of wall clock time for all four examples. The value of $\rho$ being particularly small for Example~\ref{ex:ex4}, this is the example where block BiCGStab gains most against loop interchanged or 
global BiCGStab.

Summarizing our findings we can conclude that one should use the block methods with particular care since there is a danger of non-
convergence due to numerical instabilities.
This can be somewhat attenuated by using the suggested QR-factorization of the block residuals. The additional arithmetic cost in the block 
methods should not be neglected, and---depending on $\rho$---
there must be a substantial gain in matrix-vector multiplications in the block methods if this additional cost is to be outweighed. 
Global methods and loop interchanged methods require about the same amount of matrix-vector multiplications and additional arithmetic cost,
so that they should require about the same time, too, if it were not for special effects in a Matlab implementation which mixes compiled 
and interpreted code. The economic versions of global BiCG and global QMR appear to perform well with respect to both, stability and 
efficiency. For better conditioned systems, e.g.\ when an efficient preconditioner is at hand,
the block BiCGStab methods and global BiCGStab behave stably, and then their runtime is less than for the economic global methods.

%\section{Conclusion}
%We proposed improved versions of the block and global methods
%for the simultaneous solutions of large and sparse linear systems of
%equations with non-Hermitian coefficient matrices. Our
%first improvement stabilizes block methods at the expense of additional arithmetic cost.
%Our second improvement almost halves the number of matrix-vector multiplications in the global methods.
%The efficiency of these modifications was demonstrated by several numerical experiments.

\bibliography{reference}

\end{document}